\begin{document}
\bibliographystyle{alpha}
\newtheorem{theorem}{Theorem}
\newtheorem{lemma}{Lemma}
\newtheorem{lem}{Lemma}[section]
\newtheorem{obs}{Observation}[section]
\newtheorem{definition}{Definition}
\newtheorem{df}{Definition}[section]
\newtheorem{proposition}{Proposition}
\newtheorem{observation}{Observation}
\newtheorem{question}{Question}
\newtheorem{remark}{Remark}
\newtheorem{property}{Property}
\newtheorem{corollary}{Corollary}
\newcounter{casenum}
\newenvironment{caseof}{\setcounter{casenum}{1}}{\vskip.5\baselineskip}
\newcommand{\case}[2]{\vskip.5\baselineskip\par\noindent {\bfseries Case \arabic{casenum}:} #1\\#2\addtocounter{casenum}{1}}

\makeatletter
\newsavebox{\@brx}
\newcommand{\llangle}[1][]{\savebox{\@brx}{\(\m@th{#1\langle}\)}%
  \mathopen{\copy\@brx\kern-0.5\wd\@brx\usebox{\@brx}}}
\newcommand{\rrangle}[1][]{\savebox{\@brx}{\(\m@th{#1\rangle}\)}%
  \mathclose{\copy\@brx\kern-0.5\wd\@brx\usebox{\@brx}}}
\makeatother

\title{Computability, orders, and solvable groups}
\author{Arman Darbinyan}
\date{}

\affil{\emph{}}

\maketitle

\begin{abstract}
	The main objective of this paper is the following two results. (1) There exists a computable bi-orderable group that does not have a computable bi-ordering; (2) There exists a bi-orderable, two-generated recursively presented solvable group with undecidable word problem. Both of the groups can be found among two-generated solvable groups of derived length $3$.
	
	(1) answers a question posed by Downey and Kurtz; (2) answers a question posed by Bludov and Glass in Kourovka Notebook.
	
	One of the technical tools used to obtain the main results is a computational extension of an embedding theorem of B. Neumann that was studied by the author earlier. In this paper we also compliment that result and derive new corollaries that might be of independent interest.
\end{abstract}






\section{Main results}
A presentation $G=\langle X \mid R \rangle$ of a countable group is called \emph{recursively enumerated} if the sets $X$ and $R \subseteq (X \cup X^{-1})^*$ are recursively enumerated or, equivalently, if the set $\{ u \in (X \cup X^{-1})^* \mid u=_G 1 \}$ is recursively enumerated. It is said that $G=\langle X \rangle$ is  a \emph{computable group} with respect to the recursively enumerated generating set $X$ if the set $\{ u \in (X \cup X^{-1})^* \mid u=_G 1 \}$ is recursive. The concept of computable groups was introduced by Rabin \cite{rabin} and Mal'cev \cite{mal'cev}.

Finitely generated groups that are computable with respect to a finite generating set are called groups with decidable word problem. A well-known property of groups with decidable word problem is that decidability of the word problem does not depend on the choice of finite generating set, hence it is an intrinsic property of the group. This is in contrast with the general case of countable groups when the property of being computable depends on the choice of the generating set.

To formulate the first main theorem, we introduce the following definition, which is a weaker form of left- and bi- orderings on groups.
\begin{definition}[p-order]
For a given group $G$, we say that a binary relation $\preceq$ on $G$ is a \emph{p-order}	 if 
\begin{itemize}
	\item it is antisymmetric, i.e. $g\preceq h$ and $h \preceq g$ imply $g=h$;
	\item $1 \preceq g$ implies $g^{-1} \preceq 1$;
	\item $1 \preceq g$ implies $1 \preceq g^n$ for any $n \in \mathbb{N}$.
\end{itemize}
If a group $G$ possesses a p-order relation, then we say that $G$ is \emph{p-orderable}.
\end{definition}

Let us recall that $G$ is a \emph{left-orderable} group if there exists a liner order on $G$ that is invariant under the left multiplication. If, in addition, the order on $G$ is also invariant under the right multiplication, then the group $G$ is called \emph{bi-oderable}.
Note that left- and bi- orders on groups are  p-order relations.

\begin{definition}
The linear order $\preceq$ on $G=\langle X \rangle$ is said to be computable with respect to the generating set $X=\{x_1, x_2, \ldots \}$ if the set $\{ (u, v) \in (X\cup X^{-1})^* \times (X\cup X^{-1})^* \mid u \preceq v \} $ is recursive.
\end{definition}

Our next main result is the following.

\begin{theorem}
	\label{theorem p-Downey-Kurtz}
	There exists a two-generated bi-orderable computable group  that does not embed in any countable group with a computable p-order relation. Moreover,  can be chosen to be a solvable group of derived length $3$.
\end{theorem}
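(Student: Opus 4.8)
The plan is to reduce the theorem to the (formally weaker‑looking, but for finitely generated groups equivalent) statement that there is a two‑generated, computable, bi‑orderable solvable group of derived length $3$ with \emph{no computable $p$-order at all}. Indeed, suppose such a group $G=\langle x_1,x_2\rangle$ has been constructed and suppose it embeds into a countable group $H$ carrying a $p$-order $\preceq$ that is computable with respect to a recursively enumerated generating set $Y$. Fixing $Y$-words $w_1,w_2$ representing the images of $x_1,x_2$ (finitely many bits of information, which is all one may use in an existence proof), and writing $\bar u$ for the $Y$-word obtained from a word $u$ over $\{x_1,x_2\}^{\pm1}$ by substituting $w_i$ for $x_i$, the map $u\mapsto\bar u$ is computable and $u=_Gv$ implies $\bar u=_H\bar v$ since the embedding is a homomorphism. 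Then $u\preceq_Gv:\Longleftrightarrow \bar u\preceq\bar v$ is a well‑defined, \emph{computable} relation on $G$, and it is a $p$-order, the defining properties of a $p$-order being inherited by restriction along an embedding. This contradicts the assumption on $G$. Since the converse implication is trivial, it suffices to build $G$.

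\textbf{Construction of an auxiliary group.} I would first build a countable, computable (decidable word problem), bi‑orderable \emph{metabelian} group $A$ that has no computable $p$-order. The guiding observation is that, for a computable group, the collection of positive cones of $p$-orders — the subsets $P$ of a fixed computable transversal of $A$ satisfying $P\cap P^{-1}=\{1\}$, $P\cup P^{-1}=A$, and closure under positive powers — is an effectively closed ($\Pi^0_1$) class; a $p$-order is computable exactly when its positive cone is; and the bi‑order cones form a subclass that is nonempty whenever $A$ is bi‑orderable. So it is enough to construct $A$ so that this $\Pi^0_1$ class of $p$-cones is nonempty but has no computable member. I would obtain this by a Downey–Kurtz–style construction: build $A$ as a direct limit of metabelian groups and, at step $e$, commit a relation in $A$ (for instance, an equality of two $A$-elements on which the $e$-th partial computable relation $\varphi_e$ behaves asymmetrically, or a power relation incompatible with the required closure under powers) so as to guarantee that $\varphi_e$ is not a $p$-cone of $A$, while (a) keeping $A$ bi‑orderable, i.e.\ never exhausting the bi‑order cones, and (b) keeping the word problem of $A$ decidable, each new relation being introduced ``deep'' enough not to disturb already decided instances.

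\textbf{Passing to two generators and derived length $3$.} I would then apply the author's computable version of B. Neumann's embedding theorem to embed $A$ into a two‑generated group $G$. Since Neumann's construction realizes $G$ inside iterated (permutational) wreath products built from $A$, and wreath products of bi‑orderable groups are bi‑orderable with bi‑orderability passing to subgroups, $G$ is bi‑orderable; since $A$ is metabelian, the wreath construction places $G$ among the solvable groups of derived length $3$ (and one arranges it to be exactly $3$); and the computable version of the embedding guarantees that $G$ has decidable word problem and that $n\mapsto(\text{a }\{x_1,x_2\}\text{-word for the image of the }n\text{th generator of }A)$ is computable. As in the Reduction, this last point shows that a computable $p$-order on $G$ would restrict to a computable $p$-order on $A$ — impossible. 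Hence $G$ has no computable $p$-order, and by the Reduction it embeds into no countable group carrying a computable $p$-order.

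\textbf{Where the difficulty lies.} The crux is the construction of $A$: one must simultaneously (i) defeat \emph{every} computable relation as a candidate $p$-cone — which, $p$-orders being a weaker and hence more abundant notion than bi‑orders, is genuinely stronger than merely killing all computable bi‑orders; (ii) preserve bi‑orderability of $A$ throughout the construction; and (iii) keep the word problem of $A$ decidable, all while remaining inside the class of metabelian groups so that the Neumann step produces derived length $3$. The remaining point requiring care is the verification that the computable Neumann embedding can be taken to preserve bi‑orderability through its wreath‑product presentation; this is of a standard character once the embedding is set up.
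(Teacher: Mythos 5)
Your reduction step (pulling a computable p-order back along the embedding via fixed words $w_1,w_2$ and the substitution map) is correct and is essentially the paper's Lemma \ref{lemma second about computable order}; the final step (using the computable map $n\mapsto$ word for the image of the $n$th generator of $A$ to restrict a computable p-order on $G$ to the fixed computable copy of $A$) also matches the paper's use of part \ref{c} of Theorem \ref{theorem-embedding}. The problem is that the heart of the theorem --- producing a computable, bi-orderable base group whose fixed computable copy admits no computable p-order --- is exactly the part you do not prove. You describe it as a ``Downey--Kurtz--style'' direct-limit/diagonalization against every partial computable relation $\varphi_e$, but you give no mechanism for defeating a candidate p-order (a much more flexible object than a bi-order: it need not be total, left-invariant, or determined by a positive cone, so the $\Pi^0_1$-cone framing you invoke does not even capture it) while simultaneously preserving bi-orderability and a decidable word problem; as you yourself say, this is ``the crux,'' and it is left open. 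The paper needs no priority argument at all: it fixes a recursively inseparable pair of r.e.\ sets $(\mathcal{M},\mathcal{N})$ and takes $A\simeq\bigoplus_i\mathbb{Z}$ with the explicit computable presentation \eqref{equation of the presentation for downey-kurtz}, whose relations $a_{2n_i}=a_{2n_i-1}^{p_i}$ and $a_{2m_i}=a_{2m_i-1}^{-p_i}$ force the ``signs'' of $a_{2n},a_{2n-1}$ to agree for $n\in\mathcal{N}$ and to disagree for $n\in\mathcal{M}$; a computable p-order on the two-generated group $G$ would then make the set $\mathcal{L}$ of indices with agreeing signs recursive and separate $\mathcal{N}$ from $\mathcal{M}$, a contradiction. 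Without some such concrete coding, your outline is not a proof.

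A second, independent error: you take $A$ metabelian and claim the Neumann-type embedding lands in derived length $3$. The embedding used here (and in your sketch) passes through $A\wr\langle z\rangle\wr\langle s\rangle$ and raises derived length by $2$ (part \ref{d} of Theorem \ref{theorem-embedding}), so a metabelian base gives derived length $4$. The paper takes $A$ \emph{abelian} precisely so that $G$ has derived length $3$; note that this is compatible with Solomon's theorem because Solomon's result concerns the existence of \emph{some} computable presentation with computable order, whereas the obstruction here is tied to the fixed computable copy \eqref{equation of the presentation for downey-kurtz} used in the embedding.
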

The question of Downey and Kurtz asked in \cite{downey-remmel} as Question 6.12 (ii) is as follows.
\begin{quote}
	 Is every computable orderable group isomorphic to computably orderable group?                                                                                                                                                                                                                                            
\end{quote}

For the case of left-orderable groups a negative answer was recently obtained by Harrison-Trainor in \cite{harrison-trainor}. However, as the author mentioned in the abstract of \cite{harrison-trainor}, the more general case of bi-orderable groups is left as open. 

From Theorem \ref{theorem p-Downey-Kurtz} we immediately get the answer to the question of Downey and Kurtz for the general case of bi-orderable groups.
\begin{corollary}
	\label{cor-downey-kurtz}
	There exists a computable bi-orderable group $G$, which does not have a presentation with computable bi-order. $G$ can be chosen to be two-generated solvable group of derived length $3$.
\end{corollary}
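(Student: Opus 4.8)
The plan is to obtain Corollary~\ref{cor-downey-kurtz} as an immediate consequence of Theorem~\ref{theorem p-Downey-Kurtz}. First I would let $G$ be the group supplied by Theorem~\ref{theorem p-Downey-Kurtz}: a two-generated, bi-orderable, computable group that is solvable of derived length $3$ and that does not embed into any countable group admitting a computable p-order relation. All the structural assertions of the corollary — two-generation, solvability of derived length $3$, computability, and bi-orderability — are then already part of the conclusion of Theorem~\ref{theorem p-Downey-Kurtz}, so the only thing left to verify is that $G$ has no presentation with a computable bi-order.

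For this last point I would argue by contradiction. Suppose that, for some generating set $Y = \{y_1, y_2, \ldots\}$ of $G$, there is a bi-order $\preceq$ on $G$ that is computable with respect to $Y$, i.e.\ the set $\{(u,v) \in (Y \cup Y^{-1})^* \times (Y \cup Y^{-1})^* \mid u \preceq v\}$ is recursive. As noted immediately after the definition of p-order, every bi-order on a group is in particular a p-order relation; hence $G$, generated by $Y$, is a countable group carrying a computable p-order. The identity map $G \hookrightarrow G$ is then an embedding of $G$ into a countable group with a computable p-order relation, contradicting the non-embedding clause of Theorem~\ref{theorem p-Downey-Kurtz}. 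Therefore $G$ has no presentation with a computable bi-order, which completes the proof.

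I do not anticipate any genuine obstacle here: the entire difficulty is packed into Theorem~\ref{theorem p-Downey-Kurtz}, and the corollary is just the specialization of its non-embedding statement to the trivial self-embedding, combined with the elementary observation that bi-orders are p-orders. The only care required is to phrase ``presentation with computable bi-order'' correctly in terms of the generating set, so that it indeed falls under the hypothesis ``countable group with a computable p-order relation.''
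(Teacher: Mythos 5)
Your proposal is correct and matches the paper's intent exactly: the paper derives the corollary "immediately" from Theorem \ref{theorem p-Downey-Kurtz} via precisely this observation that a bi-order is a p-order, so a presentation of $G$ with computable bi-order would make $G$ itself (under the identity embedding) a countable group with computable p-order, contradicting the theorem. No genuinely different route is taken, and your care about phrasing computability of the order relative to a generating set is consistent with the paper's definitions.
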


In \cite{bludov-glass-1}, Bludov and Glass showed that every left-orderable computable group embeds into a finitely presentable left-orderable group with decidable word problem.
The combination of these two results with Theorem \ref{theorem p-Downey-Kurtz} immediately leads to yet another strengthening of the result of Harrison-Trainor as follows.

\begin{corollary}
	There exists a finitely presentable left-orderable group with decidable word problem without computable left-order.
\end{corollary}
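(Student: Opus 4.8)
The plan is to combine three ingredients that are by now in place. First, Theorem~\ref{theorem p-Downey-Kurtz} produces a two-generated bi-orderable computable group $G$ of derived length $3$ that embeds into no countable group carrying a computable p-order relation; in particular, since computable left-orders are a fortiori computable p-orders, $G$ embeds into no countable group with a computable left-order. Second, the Bludov--Glass theorem from \cite{bludov-glass-1} says that any left-orderable computable group embeds into a finitely presentable left-orderable group with decidable word problem. Third, bi-orderability implies left-orderability, so $G$ itself is left-orderable and computable, and hence the Bludov--Glass embedding applies to it. Let $H$ be the resulting finitely presentable left-orderable group with decidable word problem containing $G$.

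The key step is then to argue that $H$ cannot have a computable left-order. Suppose, for contradiction, that $H$ admits a presentation $H=\langle Y\rangle$ on a (finite) generating set $Y$ with respect to which some left-order $\preceq_H$ is computable, i.e.\ the set $\{(u,v)\mid u\preceq_H v\}$ is recursive. The embedding $G\hookrightarrow H$ gives, for the generators $x_1,x_2$ of $G$, fixed words $w_1,w_2$ over $Y\cup Y^{-1}$ representing their images; since $H$ has decidable word problem and the map on words $\phi:(X\cup X^{-1})^*\to (Y\cup Y^{-1})^*$ sending $x_i\mapsto w_i$ is computable, we can decide $u\preceq_H v$ for images of words $u,v$ over $X$. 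Restricting $\preceq_H$ to the subgroup $G\le H$ yields a left-order on $G$ (left-invariance and linearity are inherited by any subgroup), and this restricted order is computable with respect to $X$ because membership in $\{(u,v)\mid \phi(u)\preceq_H\phi(v)\}$ is recursive. This contradicts the conclusion of Theorem~\ref{theorem p-Downey-Kurtz} that $G$ embeds in no countable group with a computable left-order (equivalently, no computable p-order). Therefore $H$ has no computable left-order.

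I expect the only genuinely delicate point to be bookkeeping around ``computable with respect to a generating set'': one must be careful that the Bludov--Glass construction is effective enough that the inclusion $G\hookrightarrow H$ is given by an explicit word map, and that ``decidable word problem'' for the finitely presented $H$ is invoked with a fixed finite generating set so that the pullback of a recursive order-relation stays recursive. Both are standard: decidability of the word problem of a finitely generated group is independent of the finite generating set, and the Bludov--Glass embedding is explicit. Once these are noted, the argument is a short reduction, and the corollary follows.

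\begin{proof}
By Theorem~\ref{theorem p-Downey-Kurtz}, fix a two-generated bi-orderable computable group $G=\langle x_1,x_2\rangle$ that embeds in no countable group admitting a computable p-order relation. Since every bi-orderable group is left-orderable and computable, the theorem of Bludov and Glass \cite{bludov-glass-1} provides a finitely presentable left-orderable group $H$ with decidable word problem together with an embedding $\iota:G\hookrightarrow H$. We claim $H$ has no computable left-order.

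Assume otherwise: there is a finite generating set $Y$ of $H$ and a left-order $\preceq_H$ on $H$ such that $L:=\{(u,v)\in(Y\cup Y^{-1})^*\times(Y\cup Y^{-1})^*\mid u\preceq_H v\}$ is recursive (here we use that decidability of the word problem of $H$ does not depend on the choice of finite generating set, so we may pass to such a $Y$). Choose words $w_1,w_2\in(Y\cup Y^{-1})^*$ with $w_i=_H\iota(x_i)$, and let $\phi:(\{x_1,x_2\}\cup\{x_1,x_2\}^{-1})^*\to(Y\cup Y^{-1})^*$ be the monoid homomorphism determined by $x_i\mapsto w_i$; this $\phi$ is computable. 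The restriction of $\preceq_H$ to $\iota(G)$ is a left-invariant linear order, hence pulls back along $\iota$ to a left-order $\preceq_G$ on $G$. For words $u,v$ over $x_1,x_2$ we have $u\preceq_G v$ iff $\phi(u)\preceq_H\phi(v)$, i.e.\ iff $(\phi(u),\phi(v))\in L$; since $\phi$ is computable and $L$ is recursive, $\preceq_G$ is computable with respect to $\{x_1,x_2\}$. In particular $G$ embeds (trivially, via the identity) in a countable group with a computable p-order, contradicting the choice of $G$. Hence no such $Y$ and $\preceq_H$ exist, and $H$ is a finitely presentable left-orderable group with decidable word problem and no computable left-order.
\end{proof}
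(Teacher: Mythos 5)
Your proposal is correct and takes essentially the same route as the paper: combine Theorem \ref{theorem p-Downey-Kurtz} with the Bludov--Glass embedding of a left-orderable computable group into a finitely presentable left-orderable group with decidable word problem, and note that a computable left-order on the big group would contradict the theorem. The paper treats this as immediate, and indeed your explicit pullback of the order to $G$ (and the restriction to a finite generating set) is not even needed, since Theorem \ref{theorem p-Downey-Kurtz} already forbids $G$ from embedding in \emph{any} countable group with a computable p-order, and a computable left-order is in particular such a relation.
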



Addressing the question of Downey and Kurtz, Solomon showed in \cite{solomon-1} that for the class of abelian groups (i.e., for solvable groups of derived length $1$) the answer to the question is positive, i.e. every bi-orderable computable abelian group possesses a presentation with computable bi-order. This result in combination with Corollary \ref{cor-downey-kurtz} leads to the following question.
\begin{question}
Is it true that every computable metabelian bi-orderable group possesses a presentation with computable bi-order?
\end{question}

In Kourovka notebook \cite[ Question 17.28]{kourovka}, Bludov and Glass asked the following question.
\begin{quote}
	Is there a solvable left-orderable group with undecidable word problem?
\end{quote}

The next theorem answers this question.

\begin{theorem}
\label{theorem bludov-glass}
	There exists a two-generated recursively presented bi-orderable solvable group  of derived length $3$ with undecidable word problem.
\end{theorem}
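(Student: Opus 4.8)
The plan is to build the group by combining two ingredients: a bi-orderable group encoding an undecidable set, and a controlled embedding into a two-generated solvable group that preserves both bi-orderability and the recursive presentability (but not decidability) of the word problem. For the first ingredient, I would start from a recursively enumerable but non-recursive set $S \subseteq \mathbb{N}$ and encode it into a countable bi-orderable metabelian group $H$ in such a way that the word problem of $H$ is undecidable precisely because membership in $S$ is undecidable; the natural candidate is a subgroup of a wreath product like $\mathbb{Z} \wr \mathbb{Z}$ (or a restricted direct power of $\mathbb{Z}$ with a $\mathbb{Z}$-action), which is bi-orderable since wreath products of bi-orderable groups are bi-orderable, and where one can "switch on" generators indexed by $S$ to make the set $\{u : u =_H 1\}$ recursively enumerated but not recursive. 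One must check that $H$ can be given a \emph{recursively enumerated} presentation — this is where the recursive enumerability of $S$ is used — while the word problem stays undecidable.

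Next I would invoke the computational strengthening of B. Neumann's embedding theorem alluded to in the abstract: every countable (recursively presented) group of derived length $d$ embeds into a two-generated group of derived length $d+1$ (or $d+2$), and, crucially, this embedding can be performed computably/recursively, so that the ambient two-generated group is again recursively presented, its word problem reduces to that of $H$, and — for our purposes — the construction preserves bi-orderability. Applying this to the metabelian $H$ (derived length $2$) yields a two-generated solvable group $G$ of derived length $3$. Since $H$ embeds in $G$ and the embedding is a computable reduction, $G$ has undecidable word problem (the word problem of $H$ is a many-one reduct of that of $G$, or vice versa — either direction suffices to transfer undecidability appropriately), and $G$ is recursively presented because the Neumann-type embedding is effective.

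The remaining point is bi-orderability of $G$: here I would rely on the fact that the Neumann embedding realizes $G$ inside an iterated wreath-product-like construction built from $H$ and $\mathbb{Z}$'s, and that each step (wreath products, extensions with bi-orderable kernel and bi-orderable quotient acting "order-preservingly", restricted direct products) preserves bi-orderability; one assembles a bi-order on $G$ from a bi-order on $H$ and the standard bi-orders on the auxiliary free abelian and infinite cyclic pieces. This is essentially the same bi-orderability bookkeeping that underlies Theorem~\ref{theorem p-Downey-Kurtz}, so I would quote or re-use that machinery rather than redo it.

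The main obstacle I anticipate is the simultaneous control of three competing demands in the embedding step: keeping the derived length at exactly $3$ (not $4$), keeping the presentation recursively enumerated (so the effective version of Neumann's theorem must be applied to a group $H$ whose own presentation is only recursively enumerated, not recursive), and preserving bi-orderability through the two-generator construction. Getting an effective embedding that does all three at once is the technical heart; the undecidability of the word problem of $G$ then follows almost for free from the undecidability built into $H$ via $S$, and recursive presentability follows from the effectiveness of the embedding. A secondary subtlety is ensuring that the way $S$ is encoded into $H$ genuinely forces undecidability of $=_H 1$ and is not accidentally decidable — this requires choosing the encoding so that deciding triviality of a specific family of words is Turing-equivalent (or at least not Turing-below) membership in $S$.
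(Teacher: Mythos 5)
Your overall strategy --- encode a recursively enumerable non-recursive set into a countable bi-orderable group given by a recursively enumerated presentation whose word problem (with respect to that generating set) is undecidable, then push it into a two-generated solvable group via the effective Neumann-type embedding, which preserves recursive presentability and bi-orderability and transfers the undecidability --- is exactly the paper's strategy, and your worry about applying the embedding to a group whose presentation is only recursively enumerated (not recursive) is already settled by parts \ref{a} and \ref{b} of Theorem \ref{theorem-embedding}: $G$ is recursively presented iff the base group is, and $G$ has decidable word problem iff the base group is computable with respect to the chosen generating set, which is precisely the direction needed to transfer undecidability. The genuine gap is the point you flag but do not resolve: the derived length. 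The embedding raises derived length by $2$ (part \ref{d}), so starting from a metabelian $H$ (a subgroup of $\mathbb{Z}\wr\mathbb{Z}$) you land at derived length $4$, not $3$, and nothing in your sketch explains how to save a level.

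The fix, and the way the paper proceeds, is to make the base group \emph{abelian}: the undecidability need not be carried by the isomorphism type of $H$ at all, only by the chosen presentation. Concretely, fix a recursively enumerable non-recursive set $\mathcal{N}=\{n_1,n_2,\ldots\}$ and take
\begin{align*}
A=\langle a_i,\ i\geq 1 \mid [a_i,a_j]=1,\ a_{2n_i}=a_{2n_i-1},\ i,j\in\mathbb{N}\rangle,
\end{align*}
which is abstractly just $\bigoplus_{i=1}^{\infty}\mathbb{Z}$ (hence bi-orderable and of derived length $1$), has a recursively enumerated presentation, but is not computable with respect to these generators, since deciding $a_{2n}=_A a_{2n-1}$ amounts to deciding $n\in\mathcal{N}$. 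Applying Theorem \ref{theorem-embedding} to this presentation yields a two-generated group of derived length $1+2=3$ that is bi-orderable (part \ref{e}), recursively presented (part \ref{a}), and has undecidable word problem (part \ref{b}). So the ``technical heart'' you anticipate dissolves once the encoding is placed in the presentation of a free abelian group rather than inside a metabelian group; as the paper's remark emphasizes, the computational behaviour of the embedding depends on the chosen generating set, and that dependence is exactly what is being exploited here.
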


\subsection{Computability and group embeddings}
The main technical tool of our paper is the following embedding theorem, which, in a weaker form and using a slightly different language, can be found in \cite{darbinyan - embedding}, and is based on a modified version of B. Neumann's construction from \cite{neumann-embedding}.

\begin{theorem}
\label{theorem-embedding}
Let $H= \langle X \rangle$ be a group with countable generating set $X=\{x_1, x_2, \ldots\}$. Then there exists an embedding $\Phi_X: H \hookrightarrow G$ into a two-generated group $G=\langle f, c \rangle$ such that the following takes place.
\begin{enumerate}[label=(\alph*)]
    \item \label{c}There exists a computable map $\Phi_X: i \mapsto \{f^{\pm 1}, s^{\pm 1}\}^*$ such that $\Phi_X$ represents the element $\Phi_X(x_i)$ in $G$;
	\item \label{d} If $H$ is a solvable group of derived length $l$, then $G$ is a solvable group of derived length $l+2$;
	\item \label{a} $G$ has a recursive presentation if and only if $H$ has a recursive presentation with respect to the generating set $X$;
	\item \label{b} $G$ has decidable word problem if and only if $H$ is computable with respect to the generating set $X$;
		\item \label{f}If $H$ is a computable group with respect to the generating set $X$, then the membership problem for the subgroup $\Phi_X(H) \leq G$ is decidable, i.e. there exists an algorithm that for any $g\in G$ decides whether or not $g \in \Phi_X(H)$;
	\item \label{e}If $H$ is  left- or bi- orderable, then so is $G$ and $G$ continues the order on $H$. Moreover, if with respect to the generating set $X$ there is a computable order on $H$, then $G$ has a computable order as well that continues the order on $H$;
	\item \label{dd} There exist $N_1, N_2 \leq G$ such that $\Phi_X(H) \vartriangleleft N_1 \vartriangleleft N_2 \vartriangleleft G $, hence $\Phi_X(H)$ is a subnormal subgroup of $G$;
	\end{enumerate}	
\end{theorem}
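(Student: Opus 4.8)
The plan is to produce $G$ by the effective refinement of B.~Neumann's embedding from \cite{darbinyan - embedding} (a modification of the construction in \cite{neumann-embedding}), and then to read the seven properties off the explicit shape of that construction; the bare two-generated embedding being classical, the substance lies in the verification.

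First I would recall the construction. Fix a recursively enumerated presentation $H=\langle X\mid R\rangle$, $X=\{x_1,x_2,\dots\}$. For each $i$ one chooses an explicit word $w_i=w_i(f,c)$ in two new letters $f,c$ --- assembled from $c$-conjugates of $f$ and commutators in Neumann's manner, and computable uniformly from $i$ --- and sets $G:=\langle f,c\mid \widetilde R\cup S\rangle$, where $\widetilde R$ is the image of $R$ under $x_i\mapsto w_i(f,c)$ and $S$ is a recursive set of ``structural'' relators. The point of $S$ is to create enough room that $\Phi_X\colon x_i\mapsto w_i(f,c)$ is \emph{injective} on $H$ while keeping $G$ two-generated; $S$ realizes two nested restricted wreath-type layers over $\mathbb Z$. (One such layer cannot turn a countably generated group into a two-generated one, and an unrestricted --- Cartesian --- layer would wreck effectivity; hence two restricted layers.) This already yields \ref{c}, by the choice of the $w_i$, and \ref{dd}: there is a two-step subnormal series $\Phi_X(H)\vartriangleleft N_1\vartriangleleft N_2\vartriangleleft G$, with $N_1$ the base of the inner layer (so $\Phi_X(H)$ is one of its direct factors), $N_2$ the inner layer itself ($N_2/N_1$ cyclic), and $G/N_2$ abelian. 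Since $G/N_2$ and $N_2/N_1$ are abelian we get $G''\le N_1$, and $\operatorname{dl}(N_1)=\operatorname{dl}(\Phi_X(H))=l$ because $N_1$ is a direct sum of copies of $H$; hence $\operatorname{dl}(G)\le l+2$, and the matching lower bound is a direct computation inside the two-layer structure, giving \ref{d}.

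Granting the construction, several items are formal. For the ``only if'' directions of \ref{a} and \ref{b}: since $\Phi_X$ is an embedding and $i\mapsto w_i(f,c)$ is computable, the set $\{u\in(X\cup X^{-1})^*\mid u=_H 1\}$ is the preimage, under the computable map $u\mapsto u(w_1,w_2,\dots)$, of $\{v\mid v=_G 1\}$, hence recursively enumerated (resp.\ recursive) whenever the latter is. For the ``if'' direction of \ref{a}: if $R$ is recursively enumerated, so is $\widetilde R\cup S$, and by correctness this presents $G$. For \ref{e}: each building block of the construction preserves left- and bi-orderability and canonically extends the order --- a restricted wreath product $A\wr B$ of left- (resp.\ bi-) orderable $A,B$ is left- (resp.\ bi-) orderable, using the order on $B$ to locate the top coordinate of the support and the order on $A$ to compare there, and subgroups and extensions of this type inherit orders --- so the given order on $\Phi_X(H)\cong H$ propagates stepwise to an order on $G$; and when the order on $H$ is computable with respect to $X$, comparing two elements of $G$ reduces through their normal forms to finitely many comparisons in $H$, so $G$ carries a computable order with respect to $\{f,c\}$.

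The two harder points are the following, and I expect the first to be the main obstacle. The first is the \emph{correctness} of the construction --- that $\Phi_X$ is injective --- which is the Neumann-type heart of the argument: one must show that imposing $\widetilde R$ in the presence of the structural relators $S$ does not collapse $\langle w_1,w_2,\dots\rangle$ beyond the relations already holding in $H$, and this is exactly where the two layers of ``room'' are needed and where one must set up normal forms for the two wreath-type layers. The second is the ``if'' direction of \ref{b} together with \ref{f}: here one produces an explicit normal form for elements of $G$ in terms of the two-layer structure and an algorithm computing it \emph{relative to an oracle for the word problem of $H$} --- an element of a restricted wreath product is a pair (finitely supported base function, shift exponent) whose triviality is tested in the factor, and iterating this through the two layers and the passage to the two-generated subgroup solves the word problem of $G$ from that of $H$; \ref{f} then follows because, once the normal form of $g\in G$ is computed, $g\in\Phi_X(H)$ is the decidable condition that this normal form be supported only on the coordinate carrying $\Phi_X(H)$.
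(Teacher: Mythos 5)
Your plan gets the global strategy right (a Neumann-type two-layer wreath construction over $\mathbb{Z}$, then a verification of each item), but its central structural claim is backwards, and the proof as outlined would fail there. You assert that the construction uses ``two nested \emph{restricted} wreath-type layers'' because an unrestricted (Cartesian) layer ``would wreck effectivity.'' In fact two restricted layers cannot produce the embedding for a general countable $H$: if $f,c$ are any two elements of the restricted product $(H\wr_{r}\mathbb{Z})\wr_{r}\mathbb{Z}$, the base components of elements of $\langle f,c\rangle$ only take values among products of the \emph{finitely many} nontrivial values of the base parts of $f$ and $c$, so $\langle f,c\rangle$ is trapped inside (a cyclic extension of functions into) a finitely generated subgroup built from a finitely generated subgroup $H_0\leq H$; taking $H=\bigoplus_p\mathbb{Z}/p\mathbb{Z}$ already gives a contradiction via torsion primes. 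The paper's construction therefore works inside the \emph{full} wreath products $H\wr\langle z\rangle$ and $L\wr\langle s\rangle$, with generators of infinite but computably structured support: $f_i(z^n)=x_i$ for all $n>0$, and $f(s)=z$, $f(s^{2^i})=f_i$, trivial elsewhere. Effectivity is not wrecked, because only the subgroup generated by these explicit functions is used, and triviality and membership of a word reduce to finitely many computable evaluations \emph{plus exponent-sum conditions} (Lemmas \ref{lemma-word problem} and \ref{lem 4} and its companion); those exponent sums are exactly what control the infinitely many far-away coordinates that your restricted-product normal form never has to confront, so your sketch of item \ref{b} misses the real issue. The same mistake propagates to item \ref{e}: you order $G$ using the minimum of a \emph{finite} support, whereas with the actual generators supports are infinite and one needs Neumann's well-ordered-support criterion, i.e. the content of Lemma \ref{lemma-computable order}, together with the check that $L$ and $G$ satisfy it.

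A second, related divergence: you define $G$ by a presentation $\langle f,c\mid\widetilde R\cup S\rangle$ and accordingly predict that injectivity of $\Phi_X$ (non-collapse modulo $S$) is the main obstacle. In the paper no quotient is taken at all: $G$ is a concrete subgroup of $H\wr\langle z\rangle\wr\langle s\rangle$, and injectivity is immediate from the evaluation computation \eqref{eq-evidence} and its analogue one level up (the commutators $[z,f_i]$, resp. $[f,f^{s^{2^i-1}}]$, are supported at a single point with value $x_i$, resp. $[z,f_i]$). Moreover the ``if'' direction of \ref{a} does not require exhibiting $\widetilde R\cup S$ as a recursive presentation of $G$; it follows from computability of $\Phi_X$ together with the reduction of the word problem of $G$ to that of $H$ with respect to $X$. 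So, to repair your plan, replace the presentation-with-structural-relators setup by the explicit subgroup-of-full-wreath-product construction, and supply the finite-verification lemmas (the analogues of Lemmas \ref{lemma-word problem} and \ref{lem 4}) and the well-ordered-support ordering lemma; as written, the restricted-layer premise and the resulting treatments of \ref{b}, \ref{f} and \ref{e} do not go through.
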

\begin{remark}
	We would like to mention that the computational properties of the embedding $\Phi_X$ essentially depend on the choice of the generating set $X$. In fact, in applications in Theorems \ref{theorem p-Downey-Kurtz} and \ref{theorem bludov-glass}, Theorem \ref{theorem-embedding} is applied on groups isomorphic to $\bigoplus_{i=1}^{\infty} \mathbb{Z}_i$, where $\mathbb{Z}_i$ are copies of $\mathbb{Z}$. If not particular presentations of $\bigoplus_{i=1}^{\infty} \mathbb{Z}_i$, then the corresponding embeddings would not lead to desired computational properties.
\end{remark}

The next corollaries follow straightforwardly from Theorem \ref{theorem-embedding}.
\begin{corollary}

\label{corollary-characterization of computable groups}
    A countable group $H$ is computable if and only if it is a (subnormal) subgroup of a finitely-generated group with decidable word problem such that the membership problem for the subgroup $H$ is decidable in the large group. 
\end{corollary}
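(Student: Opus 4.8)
The plan is to derive Corollary~\ref{corollary-characterization of computable groups} directly from Theorem~\ref{theorem-embedding}, proving the two implications separately.

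For the forward direction, suppose $H$ is a computable group, so that $H = \langle X \rangle$ for some recursively enumerated generating set $X = \{x_1, x_2, \ldots\}$ with respect to which the word problem is decidable. I would apply Theorem~\ref{theorem-embedding} to this presentation, obtaining a two-generated group $G = \langle f, c \rangle$ and an embedding $\Phi_X \colon H \hookrightarrow G$. By part~\ref{b}, since $H$ is computable with respect to $X$, the group $G$ has decidable word problem; in particular $G$ is finitely generated. By part~\ref{f}, the membership problem for the subgroup $\Phi_X(H) \leq G$ is decidable. By part~\ref{dd}, $\Phi_X(H)$ is subnormal in $G$. Identifying $H$ with its image $\Phi_X(H)$, this exhibits $H$ as a subnormal subgroup of a finitely-generated group with decidable word problem in which the membership problem for $H$ is decidable, which is exactly what the statement requires.

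For the converse, suppose $H$ is a (subnormal, or merely abstract) subgroup of a finitely-generated group $K = \langle y_1, \ldots, y_n \rangle$ with decidable word problem, and suppose the membership problem for $H$ in $K$ is decidable. I would produce a computable generating set for $H$ as follows: enumerate all words $w$ over $\{y_1^{\pm 1}, \ldots, y_n^{\pm 1}\}$ in some canonical order and, using the decidability of the membership problem, select the subsequence $w_1, w_2, \ldots$ of those words that represent elements of $H$; this is a recursively enumerated set and it generates $H$. To decide the word problem of $H$ with respect to this generating set, given a word $u$ in the $w_i$, rewrite it (computably) as a single word over the $y_j$ and apply the decision procedure for the word problem of $K$; the answer is the same since $H \leq K$. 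Hence $H$ is computable. Note the subnormality hypothesis is not needed for this direction — it is automatically available from the forward direction, so the characterization is stated with "(subnormal)" as a parenthetical strengthening.

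I do not expect any real obstacle here: both directions are essentially bookkeeping on top of Theorem~\ref{theorem-embedding}, and the only point requiring a small amount of care is making sure that in the converse the chosen generating set for $H$ is genuinely recursively enumerated (which follows from decidability of membership) and that the rewriting from words in $w_i$ to words in $y_j$ is effective (which is clear, since each $w_i$ is an explicitly given word). The content of the corollary is entirely in the forward direction, and there the work has already been done in the statement of Theorem~\ref{theorem-embedding}, parts~\ref{b}, \ref{f}, and~\ref{dd}.
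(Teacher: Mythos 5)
Your proposal is correct and follows essentially the route the paper intends: the paper gives no separate proof, stating only that the corollary follows straightforwardly from Theorem \ref{theorem-embedding}, and your forward direction via parts \ref{b}, \ref{f}, \ref{dd} together with the easy converse (decidable membership yields a recursively enumerated generating set of $H$, and words in these generators are decided by rewriting into the ambient group's generators and using its word problem) is exactly that straightforward argument.
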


\begin{corollary}
\label{corollary-characterization of computable orders}
    A countable group $H$ has a computable  left- or bi- ordering if and only if it is a (subnormal) subgroup of a finitely-generated group with computable  left- or bi- order, respectively, such that the membership problem for the subgroup $H$ is decidable in the large group. Moreover, for any fixed computable order on $H$ we can assume that the large group continues the order on $H$.
\end{corollary}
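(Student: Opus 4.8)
The forward implication here is a direct application of Theorem~\ref{theorem-embedding}, exactly as in Corollary~\ref{corollary-characterization of computable groups}; the converse is an elementary rewriting argument. The only genuinely new point, compared with Corollary~\ref{corollary-characterization of computable groups}, is the trivial observation that a left- or bi- order restricts to any subgroup.

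For the forward implication, I would start from a recursively enumerated generating set $X=\{x_1,x_2,\ldots\}$ of $H$ with respect to which the given left- or bi- order $\preceq$ is computable. Since $u=_H v$ holds precisely when $u\preceq v$ and $v\preceq u$, computability of $\preceq$ with respect to $X$ forces $H$ to be computable with respect to $X$. Applying Theorem~\ref{theorem-embedding} to $H=\langle X\rangle$ produces an embedding $\Phi_X\colon H\hookrightarrow G$ into a two-generated group $G$. Item~\ref{e} then gives that $G$ is left- (resp. bi-) orderable, that $G$ can be equipped with a \emph{computable} order — here one uses precisely that $\preceq$ is computable with respect to $X$ — and that this order continues $\preceq$, which is the content of the ``moreover'' clause. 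Item~\ref{b} shows $G$ has decidable word problem (in particular its order is computable as claimed), item~\ref{f} supplies a decision procedure for membership in $\Phi_X(H)\le G$, and item~\ref{dd} shows that $\Phi_X(H)$ is subnormal in $G$. Hence $H$ embeds as a subnormal subgroup of a finitely generated group carrying a computable order, in which membership of the subgroup is decidable and which continues the order of $H$.

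For the converse, suppose $H\le G$ with $G$ finitely generated, $\preceq$ a computable left- or bi- order on $G$, and the membership problem for $H$ in $G$ decidable; subnormality of $H$ is not needed. Fix a finite generating set $A$ of $G$; computability of the order does not depend on this choice (a word over one finite generating set rewrites computably into a word over another), so $\preceq$ is computable with respect to $A$. The restriction of $\preceq$ to $H$ is again a left- (resp. bi-) order, so it remains to exhibit a recursively enumerated generating set of $H$ making it computable. I would enumerate all words of $(A\cup A^{-1})^*$ and, using the membership decision procedure, retain exactly those that lie in $H$; this yields a computable list $Y=\{y_1,y_2,\ldots\}$ of words over $A^{\pm 1}$ representing precisely the elements of $H$, so in particular $H=\langle Y\rangle$. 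Given $u,v\in(Y\cup Y^{-1})^*$, replacing each letter $y_i^{\pm 1}$ by the corresponding word over $A^{\pm 1}$ produces words $u',v'$ over $A^{\pm 1}$ representing the same elements of $G$, and then $u\preceq v$ in $H$ if and only if $u'\preceq v'$ in $G$, which is decidable. Thus $\preceq|_H$ is computable with respect to $Y$, and $H$ has a computable order.

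Since all the substantive work has been absorbed into Theorem~\ref{theorem-embedding}, I do not anticipate a real obstacle. The part that deserves an explicit (if immediate) remark — and which has no analogue in Corollary~\ref{corollary-characterization of computable groups} — is that left- and bi- orders restrict to subgroups, together with the routine fact that one may translate computably between the finite generating set of $G$ and the generating set $Y$ of $H$.
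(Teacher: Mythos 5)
Your proposal is correct and matches the paper's route: the paper gives no separate argument for this corollary beyond noting that it follows straightforwardly from Theorem~\ref{theorem-embedding}, and your forward direction (computability of the order gives computability of $H$, then items \ref{e}, \ref{b}, \ref{f}, \ref{dd}) together with the elementary converse (restrict the order, enumerate $H$ via the membership algorithm, rewrite words over the finite generating set) is exactly that intended argument, spelled out.
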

\begin{remark}
For a group $H$ both the properties of being computable and having computable  left- or bi- orders depends on the presentation of the group. One of the noteworthy aspects of Corollaries \ref{corollary-characterization of computable groups} and \ref{corollary-characterization of computable orders} is that they reformulate these computability properties for $H$ in terms that do not depend on the presentation of the group.
\end{remark}

\section{Proof of Theorem \ref{theorem p-Downey-Kurtz}}

\begin{lem}
\label{lemma-order computability reduction to finite case}
Let $G$ be a finitely generated group with decidable word problem. Then $G$ has a presentation with computable (p-, left-, or bi-) order if and only if that order is computable with respect to any finite generating set.
\end{lem}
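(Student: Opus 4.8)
The plan is to show that computability of a fixed order on $G$ is independent of the choice of finite generating set, given that $G$ has decidable word problem. One direction is trivial: if the order is computable with respect to every finite generating set, then in particular it is computable with respect to the one used to define ``a presentation with computable order''. For the converse, suppose $\preceq$ is a (p-, left-, or bi-) order on $G$ that is computable with respect to some finite generating set $X = \{x_1, \dots, x_n\}$, meaning the set $\{(u,v) \in (X \cup X^{-1})^* \times (X \cup X^{-1})^* \mid u \preceq v\}$ is recursive. Let $Y = \{y_1, \dots, y_m\}$ be an arbitrary finite generating set of $G$. I want to show $\{(p,q) \in (Y \cup Y^{-1})^* \times (Y \cup Y^{-1})^* \mid p \preceq q\}$ is recursive.

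The key step is to produce a computable translation between words over $Y \cup Y^{-1}$ and words over $X \cup X^{-1}$ that represent the same group element. Since $Y$ generates $G$ and $X$ is a finite generating set, each $y_j$ equals some word $w_j \in (X \cup X^{-1})^*$ in $G$; because $G$ has decidable word problem, such a word $w_j$ can be found effectively (enumerate all words over $X \cup X^{-1}$ and test equality with $y_j$ using the word problem algorithm, which halts since at least one such word exists). Fix such words $w_1, \dots, w_m$ once and for all. Then the substitution map $\sigma \colon (Y \cup Y^{-1})^* \to (X \cup X^{-1})^*$ sending $y_j \mapsto w_j$ and $y_j^{-1} \mapsto w_j^{-1}$ is computable and satisfies $\sigma(p) =_G p$ for every word $p$. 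To decide whether $p \preceq q$ for $p, q \in (Y \cup Y^{-1})^*$, compute $\sigma(p)$ and $\sigma(q)$ and query the decision procedure for $\preceq$ with respect to $X$ on the pair $(\sigma(p), \sigma(q))$; since $\sigma(p) =_G p$ and $\sigma(q) =_G q$ and $\preceq$ is a relation on group elements, the answer is correct.

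The main (and essentially only) obstacle is the effectivity of finding the words $w_j$ expressing the new generators in terms of the old ones; this is exactly where the hypothesis of decidable word problem is used, and it is what makes the argument work for finite generating sets but not for arbitrary recursively enumerated ones (where one has no algorithm to certify that a candidate word is correct). Everything else is a routine composition of computable maps with a recursive decision procedure. The argument is uniform in the three cases (p-order, left-order, bi-order), since it only uses that $\preceq$ is a well-defined binary relation on the elements of $G$ and never refers to the order axioms themselves.
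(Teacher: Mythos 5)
Your translation argument is the same device the paper uses in the second half of its own proof, but you have quietly strengthened the hypothesis: you assume the given order is computable with respect to some \emph{finite} generating set $X=\{x_1,\ldots,x_n\}$. In this paper a ``presentation'' may have a countably infinite recursively enumerated generating set $X=\{x_1,x_2,\ldots\}$, and that is exactly the case the lemma is needed for: in the proof of Theorem \ref{theorem p-Downey-Kurtz} one must rule out a computable p-order with respect to \emph{any} presentation of the two-generated group, not only presentations on finitely many generators. So, as written, your proof establishes only a special case of the statement. Moreover, the one step where you invoke decidability of the word problem --- the effective search for words $w_j\in (X\cup X^{-1})^*$ with $w_j=_G y_j$ --- is precisely the step that does not survive the passage to infinite $X$: the word problem algorithm for $G$ works on words over a fixed finite generating set, and to test $u=_G y_j$ for $u$ a word over the infinite alphabet $X$ you would need a computable dictionary translating the infinitely many $x_i$ into that finite generating set, which need not exist.

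The gap is easily repaired, and in a way that keeps your argument elementary. The words $w_1,\ldots,w_m$ need only \emph{exist}, not be found effectively: each $y_j$ is a product of finitely many letters $x_i^{\pm 1}$, so such words can be fixed non-uniformly as part of the decision procedure. With that change your substitution map $\sigma$ and the appeal to recursiveness of $\{(u,v)\in (X\cup X^{-1})^*\times (X\cup X^{-1})^*\mid u\preceq v\}$ go through verbatim even when $X$ is infinite (and, incidentally, without using the decidable word problem hypothesis at all in this direction). The paper instead handles the infinite case by first invoking Corollary \ref{corollary-characterization of computable orders} to embed $G$, together with its computable order, into a two-generated group $H$ with a computable order extending it, and then performing the same rewriting of a finite generating set of $G$ into words over the two generators of $H$; your direct route, once corrected as above, avoids that machinery.
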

\begin{proof}

Let us assume that $G=\langle X \rangle$, $X=\{x_1^{\pm 1}, x_2^{\pm 1}, \ldots \}$, such that for a given (p-, left-, or bi-)  order $\preceq_G$ on $G$, the set $\{ (u, v) \subseteq X^* \times X^* \mid u \preceq_G v \} $ is recursive. Then, by Corollary \ref{corollary-characterization of computable orders}, there exists a two-generated group $H=\langle a, b \rangle$ such that $G\leq H$,  $H$ has order $\preceq_H$ that continues the order $\preceq_G$ and is computable, and the membership problem for $G \leq H$ is decidable.

Now let us assume that $G$ is also generated by a finite set $S=\{s_1^{\pm 1}, \ldots, s_n^{\pm 1}\}$. Let $u_1,  u_2, \ldots, u_n \in \{a^{\pm 1}, b^{\pm 1} \}^*$ be such that they represent the elements $s_1, \ldots, s_n$ in $G$. Let $\phi: S^* \rightarrow X^*$ be the map induces by $s_1 \mapsto u_1$, \ldots, $s_n \mapsto u_n$. Then, 
for any $w_1, w_2 \in S^*$, $w_1 \preceq_G w_2$ is equivalent to $\phi(w_1) \preceq_H \phi(w_2)$. Therefore, since $\preceq_H$ is a computable order, we get that $\{ (u, v) \subseteq S^* \times S^* \mid u \preceq_G v \} $ is recursive as well. Since $S$ is an arbitrarily chosen generating set, one direction of the lemma is proved. The other direction is trivial.
\end{proof}

\begin{lem}
\label{lemma second about computable order}
If a finitely generated group has a computable p-order, then the induced p-order on any finitely generated subgroup is also computable	
\end{lem}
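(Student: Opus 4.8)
The plan is to reduce the statement for a finitely generated subgroup $K \le G$ to the membership-problem setup provided by Corollary~\ref{corollary-characterization of computable orders}, so that computability of the p-order on $K$ follows from computability of the p-order on $G$. First I would observe that by Lemma~\ref{lemma-order computability reduction to finite case} it suffices to exhibit one finite generating set of $K$ with respect to which the induced p-order is computable (here I should first note that $G$, having a computable p-order with respect to a finite generating set, in particular has decidable word problem, since $u =_G v$ iff $u \preceq_G v$ and $v \preceq_G u$; the same then holds for $K$). So fix a finite generating set $T = \{t_1^{\pm 1}, \ldots, t_m^{\pm 1}\}$ of $K$, and fix words $w_1, \ldots, w_m$ over the finite generating set $S$ of $G$ representing $t_1, \ldots, t_m$. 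Let $\psi : T^* \to S^*$ be the monoid homomorphism induced by $t_j \mapsto w_j$.

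The key point is then the identity: for $w, w' \in T^*$, we have $w \preceq_K w'$ if and only if $\psi(w) \preceq_G \psi(w')$, because the p-order on $K$ is by definition the restriction of $\preceq_G$ to $K$, and $\psi$ sends a word in $T^*$ to a word in $S^*$ representing the same group element of $G$. Since $\psi$ is a computable map (it is just a substitution) and the set $\{(u,v) \in S^* \times S^* \mid u \preceq_G v\}$ is recursive by hypothesis, the preimage $\{(w,w') \in T^* \times T^* \mid \psi(w) \preceq_G \psi(w')\} = \{(w,w') \mid w \preceq_K w'\}$ is recursive. Hence $\preceq_K$ is computable with respect to $T$, and by Lemma~\ref{lemma-order computability reduction to finite case} (applied to $K$, which has decidable word problem) it is computable with respect to every finite generating set of $K$.

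I do not expect any serious obstacle here; this lemma is essentially a bookkeeping consequence of the fact that a p-order, being a subset of $G \times G$, restricts to subgroups in the obvious way, combined with the trivial observation that expressing subgroup generators in terms of ambient generators is a computable operation. The only mild subtlety worth stating explicitly is the reduction via Lemma~\ref{lemma-order computability reduction to finite case}, which requires knowing that the relevant groups have decidable word problem — but that is immediate from the antisymmetry of the p-order together with its recursiveness, as noted above. (One should also keep in mind that "p-order" is not assumed to be total, but totality plays no role in this argument: the restriction of a p-order to a subgroup is again a p-order by inspection of the three defining conditions, and that is all that is needed.)
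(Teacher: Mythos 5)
Your core argument is exactly the paper's: fix representing words over $S$ for the finitely many generators of the subgroup, observe that the induced substitution $\psi$ is computable and that $w \preceq_K w'$ holds iff $\psi(w) \preceq_G \psi(w')$, and conclude that the order relation on $K$ is the preimage of a recursive set under a computable map, hence recursive. That part is correct and is all the paper does; your opening appeal to Corollary \ref{corollary-characterization of computable orders} plays no role in the actual argument and can be deleted. The scaffolding you wrap around this, however, is both unnecessary and shaky: to invoke Lemma \ref{lemma-order computability reduction to finite case} you need the word problem of $G$ (and $K$) to be decidable, and you justify this by claiming $u =_G v$ iff $u \preceq_G v$ and $v \preceq_G u$. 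Antisymmetry gives only one direction of that equivalence; the converse requires reflexivity (or totality), which is not among the three defining conditions of a p-order, so this step does not follow from the stated axioms. Fortunately the detour is not needed: the substitution argument applies verbatim to an arbitrary finite generating set $T$ of $K$, since for any such $T$ one can fix the finitely many representing words $w_1,\ldots,w_m$, and this already yields computability of the induced p-order with respect to every finite generating set of $K$ — which is precisely how the paper phrases and proves the lemma. Dropping the reduction through Lemma \ref{lemma-order computability reduction to finite case} removes the flawed word-problem claim and leaves you with the paper's proof.
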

\begin{proof}
	Indeed, let $G=\langle S \rangle$ be a finitely generated group with a computable p-order relation $\preceq_G$, i.e.
	the set $\{ u \in (S \cup S^{-1})^* \mid 1\preceq_G u \}$ is recursive. Let $H=\langle X \rangle \leq G$, $|X|< \infty$, be a finitely generated subgroup of $G$ with the induced p-order relation $\preceq_H$ (i.e. for $h_1, h_2 \in H$, $h_1 \preceq_H h_2$ if and only if $h_1 \preceq_G h_2$.) Then $\preceq_H$ is a computable relation with respect to the generating set $X$, because every word $w\in (X\cup X^{-1})^*$ can be computably rewritten as a word $v \in (S \cup S^{-1})^*$ such that $w$ and $v$ represent the same element of $H$. The last observation means that recursiveness of $\{ u \in (S \cup S^{-1})^* \mid 1\preceq_G u \}$ implies recursiveness of $\{ u \in (X \cup X^{-1})^* \mid 1\preceq_H u \}$.
\end{proof}

Let us fix a  recursively enumerable and recursively inseparable pair $(\mathcal{M}, \mathcal{N})$ of subsets of $\mathbb{N}$. (For the existence of such a pair see \cite{smullyan, shoenfield-logic}.)
Let $\mathcal{M}=\{m_1, m_2, \ldots \}$ and $\mathcal{N}=\{n_1, n_2, \ldots \}$ such that the enumerations are recursive.

Let $\mathcal{P}=\{p_1, p_2, \ldots \}$ be the set of primes listed in its natural order.

Now, let us consider the abelian group $A \simeq \bigoplus_{i=1}^{\infty} \mathbb{Z}_i$ given by the presentation
\begin{align}
\label{equation of the presentation for downey-kurtz}
    A=\langle a_i, i=1, 2, \ldots \mid [a_i, a_j]=1,  a_{2n_i}= a_{2n_i-1}^{p_i}, a_{2m_i}= a_{2m_i-1}^{-p_i}, i, j\in \mathbb{N} \rangle.
\end{align}

\begin{lem}
The presentation \eqref{equation of the presentation for downey-kurtz} is computable.
\end{lem}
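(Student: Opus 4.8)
The plan is to reduce the word problem of the presentation \eqref{equation of the presentation for downey-kurtz} to finitely many bounded searches in the recursive enumerations $(n_i)$ and $(m_i)$. Since all commutators $[a_i,a_j]$ occur among the relators, $A$ is abelian, so $A$ is the quotient of the free abelian group $F$ with basis $\{\mathbf{e}_1,\mathbf{e}_2,\ldots\}$ (identify $a_j$ with $\mathbf{e}_j$) by the subgroup $S\le F$ generated by the vectors $\mathbf{e}_{2n_i}-p_i\mathbf{e}_{2n_i-1}$ and $\mathbf{e}_{2m_i}+p_i\mathbf{e}_{2m_i-1}$, $i\in\mathbb{N}$. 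We may assume, replacing the given enumerations by injective recursive subsequences (these exist since a recursively inseparable pair consists of disjoint infinite non-recursive sets), that $i\mapsto n_i$ and $i\mapsto m_i$ are injective; together with $\mathcal{M}\cap\mathcal{N}=\varnothing$ this forces the coordinate pairs $\{2k-1,2k\}$ supporting distinct relators to be pairwise disjoint. Consequently $S$ is the direct sum of its intersections $S_k=S\cap(\mathbb{Z}\mathbf{e}_{2k-1}\oplus\mathbb{Z}\mathbf{e}_{2k})$, where $S_k=0$ if $k\notin\mathcal{M}\cup\mathcal{N}$, $S_k=\mathbb{Z}(\mathbf{e}_{2k}-p_i\mathbf{e}_{2k-1})$ if $k=n_i$, and $S_k=\mathbb{Z}(\mathbf{e}_{2k}+p_i\mathbf{e}_{2k-1})$ if $k=m_i$.

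Given $w\in(X\cup X^{-1})^*$, I would first compute its exponent-sum vector $\mathbf{w}=(e_j)_j\in F$; then $w=_A1$ iff $\mathbf{w}\in S$, which by the block decomposition is equivalent to $e_{2k-1}\mathbf{e}_{2k-1}+e_{2k}\mathbf{e}_{2k}\in S_k$ for every $k$. Only the finitely many $k$ with $(e_{2k-1},e_{2k})\neq(0,0)$ impose a constraint, so it remains to decide, for each such ``active'' $k$, whether $e_{2k-1}\mathbf{e}_{2k-1}+e_{2k}\mathbf{e}_{2k}\in S_k$.

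Fix an active $k$. If exactly one of $e_{2k-1},e_{2k}$ vanishes the constraint fails, since every nonzero element of every possible $S_k$ has both coordinates nonzero (as $p_i\ge2$), so $w\neq_A1$. If both are nonzero, membership in $S_k$ forces $k\in\mathcal{M}\cup\mathcal{N}$ and, comparing coordinates, forces $e_{2k}$ to divide $e_{2k-1}$ with quotient $\pm P$ for a prime $P$; when this holds, write $P=p_j$ for the (computably obtained) index $j$. If the quotient is $-P$, the constraint holds iff $k=n_j$; if it is $+P$, it holds iff $k=m_j$; and these conditions are decided by running the enumerations to compute $n_j$ or $m_j$ and comparing with $k$. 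If $e_{2k}\nmid e_{2k-1}$, or the quotient is not $\pm(\text{prime})$, then no $S_k$ can contain the vector, so $w\neq_A1$. Performing this finite test for each active $k$ decides whether $w=_A1$, which proves the presentation is computable.

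The part that will need the most care is the reduction to the blocks $S_k$ — that is, verifying that the relator subgroup splits as a direct sum over coordinate pairs — which is exactly where disjointness of $\mathcal{M}$ and $\mathcal{N}$ and injectivity of the enumerations are used; everything afterwards is bounded search together with divisibility and primality tests. One should note in particular that, although membership in $\mathcal{N}$ (or $\mathcal{M}$) is undecidable, the question ``is $n_j=k$?'' that the algorithm actually asks is merely a finite computation, so no undecidable oracle is ever invoked.
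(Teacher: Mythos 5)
Your proposal is correct and takes essentially the same route as the paper: both reduce a word to its abelianized normal form and then check, pair by pair $(a_{2k-1},a_{2k})$, whether the exponents form a multiple of the unique possible relator on that pair, the key computability point in each case being that once the prime $P=p_j$ (hence its index $j$) is read off, the question whether $k=n_j$ or $k=m_j$ is settled by a finite computation in the recursive enumerations — exactly the paper's observation that the set of relator subwords is recursive. Your block-decomposition formulation, together with the harmless reduction to injective enumerations (which the paper implicitly needs anyway for $A\cong\bigoplus_{i}\mathbb{Z}_i$ and for disjointness of the blocks), is just a more explicit rendering of the paper's subword-detection-and-removal argument.
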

\begin{proof}
Indeed, any element of $A$ can be presented as a finite length word in the alphabet $\{a_1^{\pm 1}, a_2^{\pm 2}, \ldots\}$, which, on its own turn, can be computationally transformed into another, reduced, word representing the same element which is of the form
\begin{align}
\label{equation intermediate}
   a_{i_1}^{\epsilon_1} a_{i_2}^{\epsilon_2} \ldots a_{i_n}^{\epsilon_n}, 
\end{align}
where $\epsilon_i \in \{\pm 1\}$. If this word is not empty and it represents the trivial element, then it must contain a subword of one of the following forms: $(a_{2n_i}a_{2n_i-1}^{-p_i})^{\pm 1}$ or $(a_{2m_i} a_{2m_i-1}^{p_i})^{\pm 1}$. Since the set of primes is recursive and $\mathcal{M}$ and $\mathcal{N}$ are recursively enumerated, the set of subwords $\{(a_{2n_i}a_{2n_i-1}^{-p_i})^{\pm 1}, (a_{2m_i} a_{2m_i-1}^{p_i})^{\pm 1} \mid i \in \mathbb{N} \}$ is recursive, hence, they can be computationally detected. Therefore, in order to check whether or not a word of the form \eqref{equation intermediate} represents a trivial element of the group $A$, we can simply find the mentioned subwords and remove  them until this procedure can not be continued. Since it is a computable procedure, the presentation \eqref{equation of the presentation for downey-kurtz} is computable.
\end{proof}

Now, let $G$ be the group obtained after embedding the group $A$ into a two-generated group according to the embedding from Theorem \ref{theorem-embedding} (we consider the embedding with respect to the presentation \eqref{equation of the presentation for downey-kurtz}.) Let the embedding be $\Phi: A \hookrightarrow G$.

By the properties of the embedding of Theorem \ref{theorem-embedding}, $G=\langle x^{\pm 1}, y^{\pm 1}\rangle$ will have decidable word problem (part \ref{b}), will be bi-orderable (part \ref{e}), and will be solvable of derived length $3$ (part \ref{d}).

However, below  we show that $G$ does not possess any computable p-order with respect to any presentation. First, note that by Lemma \ref{lemma-order computability reduction to finite case}, if $\preceq_G$ is a computable preorder on $G$ with respect to some presentation, then it is a computable p-order with respect to the generating set $\{x^{\pm 1}, y^{\pm 1}\}$, i.e. the set
\begin{align*}
	\mathcal{O}=\{ (u, v) \in \{x^{\pm 1}, y^{\pm 1}\}^* \times \{x^{\pm 1}, y^{\pm 1}\}^* \mid u \preceq_G v \}
\end{align*}
is recursive. 
It follows from part \ref{c} of Theorem \ref{theorem-embedding} that there exist a computable map $\phi: \mathbb{N} \rightarrow \{x^{\pm 1}, y^{\pm 1}\}^* $ such that $\phi(i) =  u_i \in \{x^{\pm 1}, y^{\pm 1}\}^*$ is a word representing the element $\Phi(a_i) \in G$. Therefore, the set
\begin{align*}
	\mathcal{L} = \{ n \in \mathbb{N} \mid [u_{2n} \preceq_G 1 ~\& ~u_{2n-1} \preceq_G 1] \mbox{OR} [ 1 \preceq_G u_{2n} ~\& ~1 \preceq_G u_{2n-1} ] \}
\end{align*}
is recursive as well. However, since $\preceq_G$ is a p-order relation, we have that $\mathcal{N} \subseteq \mathcal{L}$ and $\mathcal{M} \cap \mathcal{L} = \emptyset$. We get to a contradiction with the assumption that the pair $(\mathcal{M}, \mathcal{N})$ is recursively inseparable. Therefore, $G$ does not possess a computable p-order with respect to any presentation.

Now, let us show that the group $G$ cannot be embedded into a countable group with a presentation possessing a computable p-order. Indeed, by contradiction assume that $H$ is a countable group with a computable preorder $\preceq_H$ in which $G$ is embedded. It follows from Theorem \ref{theorem-embedding} and  Lemma \ref{lemma-order computability reduction to finite case}  that without loss of generality we can assume that $H$ is finitely generated and the computability of $\preceq_H$ is with respect to (any) finite generating set. However, since $G$ does not have any computable p-order, by Lemma \ref{lemma second about computable order} we immediately reach a contradiction.

\section{Proof of Theorem \ref{theorem bludov-glass}}
Let us fix a recursively enumerated non-recursive set $\mathcal{N}=\{n_1, n_2, \ldots \}$. Let us consider the following presentation of the group $\bigoplus_{i=1}^{\infty} \mathbb{Z}_i$:
\begin{align}
	\label{equation of the presentation for bludov-glass}
    A=\langle a_i, i=1, 2, \ldots \mid [a_i, a_j]=1,  a_{2n_i}= a_{2n_i-1}, i, j\in \mathbb{N} \rangle.
\end{align}
Since $\mathcal{N}$ is recursively enumerable, the presentation \eqref{equation of the presentation for bludov-glass} is recursive. However, since $a_{2n}=a_{2n-1}$ is equivalent to $n\in \mathcal{N}$ and $\mathcal{N}$ is not recursive, we get that $\eqref{equation of the presentation for bludov-glass}$ is not computable. Now, let  $A$ be embedded into a two-generated group $G$ according to Theorem \ref{theorem-embedding}. Then, $G$ is solvable of derived length $3$ and is bi-orderable. By part \ref{a} of Theorem \ref{theorem-embedding}, $G$ is recursively presentable, by part \ref{b}, the word problem in $G$ is undecidable.

\section{Proof of Theorem \ref{theorem-embedding}}

The proof of Theorem \ref{theorem-embedding} uses the embedding construction  described in \cite{darbinyan - embedding}. Moreover, the proof is in a way a commentary on the proof of the main theorem from \cite{darbinyan - embedding}. There are also differences in notations  compared with the exposition from \cite{darbinyan - embedding}. For example, the embedding $\Phi_X$ described below is the embedding $\phi$ from \cite{darbinyan - embedding}, the map $f$ defined below is $c$ in \cite{darbinyan - embedding}, $f_i$ is $b^{(i)}$, etc.

Let us recall the definition of wreath product. Given two groups $A$ and $B$, the base subgroup $A^{B}$ is the set of functions from $B$ to $A$ with pointwise multiplication and the group $B$ acts on $A^B$ from the left by automorphisms, such that for $f\in A^B$ and $b \in B$, the resulting function $bf$ is given by 
$$ (b f)(x) = f(xb), \forall x \in B. $$
For the convenience we will denote $bf$ by $f^b$.

\begin{df}
The wreath product $A Wr B$ is defined as a semidirect product $ A^B \rtimes B$, with the multiplication $(f_1b_1)(f_2 b_2) = f_1f_2^{b_1} b_1b_2$ for all $ f_1, f_2 \in A^B, ~b_1, b_2 \in B$. 
\end{df}

Let $H=\langle X \rangle$, where $X=\{x_1, x_2, \ldots\}$, be the initial group. In order to construct the group from $G$ of Theorem \ref{theorem-embedding}, first, we will describe an embedding of the group $H$ into a subgroup $L$ of the full wreath product $H \wr \langle z \rangle $, where $\langle z \rangle$ is an infinite cyclic group generated by $z$, as follows. Define the functions $f_i: \langle z \rangle \rightarrow H $, $i=1, 2, \ldots$, as
\begin{align*}
   f_i(z^n)= \left\{
                      \begin{array}{ll}
                       x_i  & \mbox{if $n > 0$ ,}\\
                       1 & \mbox{otherwise.}
                     \end{array}
                    \right.
\end{align*}
Note that 
\begin{align}
\label{eq-evidence}
   [z, f_i](z^n)= ( z f_i z^{-1}f_i^{-1} )(z^n) = \left\{
                      \begin{array}{ll}
                       x_i  & \mbox{if $n= 0$ ,}\\
                       1 & \mbox{otherwise.}
                     \end{array}
                    \right.
\end{align}
Denote  $L=\langle z, f_1, f_2, \ldots \rangle \leq H \wr \langle z \rangle$. Since $x_i \mapsto f_i$, $i\in \mathbb{N}$, embeds $H$ into $L$, we can conclude with the following.
\begin{obs}
\label{obs-41}
	The group $L$ is computable with respect to the generating set $\{ z, f_1, f_2, \ldots \} \leq H \wr \langle z \rangle$ if and only if $H$ had decidable word problem.
\end{obs}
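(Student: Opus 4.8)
The plan is to prove the equivalence by exhibiting two computable reductions; the underlying fact is the standard one that forming a wreath product with $\langle z \rangle$ preserves decidability of the word problem, the only mild wrinkle being that the generators $f_i$ are \emph{tail} functions (eventually constant on $\langle z \rangle$) rather than finitely supported.

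For the implication ``$H$ has decidable word problem $\Rightarrow$ $L$ is computable with respect to $\{z, f_1, f_2, \ldots\}$'', I would decide, for an arbitrary word $w$ in $\{z^{\pm 1}, f_1^{\pm 1}, f_2^{\pm 1}, \ldots\}$, whether $w =_L 1$ as follows. First compute the total $z$-exponent $k$ of $w$ (the image of $w$ under the projection $H \wr \langle z \rangle \twoheadrightarrow \langle z \rangle$); if $k \neq 0$ then $w \neq_L 1$. If $k = 0$, then $w$ represents an element $g$ of the base group $H^{\langle z \rangle}$; collecting $w$ into normal form expresses $g$ as a product $\prod_j (f_{i_j}^{\epsilon_j})^{z^{m_j}}$, where $\epsilon_j \in \{\pm 1\}$ and $m_j$ is the partial $z$-exponent accumulated before the $j$-th letter $f_{i_j}^{\epsilon_j}$. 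Since $(f_i^{z^m})(z^n) = f_i(z^{n+m})$, which equals $x_i$ for $n > -m$ and $1$ otherwise, the function $g$ satisfies $g(z^n) = \prod_j x_{i_j}^{\epsilon_j}$ for all $n \geq M+1$ and $g(z^n) = 1$ for all $n \leq -M$, where $M = \max_j |m_j|$, while for $-M < n \leq M$ the value $g(z^n)$ is an explicitly computable word in the $x_i$. Hence $w =_L 1$ if and only if $g(z^n) =_H 1$ for every integer $n$ with $-M \leq n \leq M+1$, which is a finite list of instances of the word problem of $H$; deciding them settles whether $w =_L 1$.

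For the converse, ``$L$ computable $\Rightarrow$ $H$ has decidable word problem with respect to $X$'', I would use the embedding $H \hookrightarrow L$, $x_i \mapsto f_i$: a word $v$ in $X \cup X^{-1}$ represents $1$ in $H$ if and only if the word obtained by the substitution $x_i^{\pm 1} \mapsto f_i^{\pm 1}$ represents $1$ in $L$ --- indeed, evaluating the resulting base-group element at $z^1$ returns $v(x_{i_1}, \ldots)$, so the map is injective, and it is visibly a homomorphism. Since that substitution is a computable operation on words, the word problem of $H$ computably reduces to the recursive set $\{u \in \{z^{\pm 1}, f_1^{\pm 1}, \ldots\}^* \mid u =_L 1\}$, hence is decidable.

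I do not anticipate a real obstacle; the only point needing care is the bookkeeping in the first direction --- checking that the tail functions $f_i^{z^{m}}$ force $g$ to be eventually constant to the right and eventually trivial to the left, with an explicitly computable cutoff $M$ --- so that the a priori infinitely many coordinates of $g$ reduce to finitely many queries to the word problem of $H$.
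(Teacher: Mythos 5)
Your proof is correct and is essentially the paper's intended argument: the paper records this as an observation justified only by the embedding $x_i \mapsto f_i$, which is exactly your second direction, while your first direction (collect $w$ into the normal form $\prod_j (f_{i_j}^{\epsilon_j})^{z^{m_j}} z^k$ and reduce triviality to finitely many word-problem queries in $H$ using the cutoff $M$) is precisely the analogue, one level down, of the analysis the paper invokes for $G \leq L \wr \langle s \rangle$ via Lemma \ref{lemma-word problem}. The bookkeeping with the tail functions $f_i$ (constant value $x_i$ for $n>-m$, trivial otherwise, hence only coordinates $-M \leq n \leq M+1$ need checking) is handled correctly.
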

If we regard the maps $f_i: \langle z \rangle \rightarrow H $ as elements of the wreath product $H \wr \langle z \rangle $, then it follows from \eqref{eq-evidence} that the following observation takes place.
\begin{obs}
\label{observation 1}
 The map $x_i \mapsto  [z, f_i]$, $i\in \mathbb{N}$, induces an isomorphism between $H$ and $ \langle [z, f_i] \mid i \in \mathbb{N} \rangle < L$.
 \end{obs}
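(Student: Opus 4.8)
The plan is to locate the elements $[z,f_i]$ explicitly inside the base group of the wreath product and observe that they all lie in a single ``coordinate copy'' of $H$. Concretely, by the computation \eqref{eq-evidence}, each commutator $[z,f_i]$, regarded as an element of $H \wr \langle z \rangle$, is the function $\delta_i \colon \langle z \rangle \to H$ supported at the identity $z^0$ of $\langle z \rangle$ with $\delta_i(z^0) = x_i$. In particular every $[z,f_i]$ lies in the subgroup
\[
D = \{\, g \in H^{\langle z \rangle} \mid g(z^n) = 1 \text{ for all } n \neq 0 \,\}
\]
of the base group consisting of functions supported at the identity of $\langle z \rangle$.

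Next I would check that the evaluation map $\mathrm{ev} \colon D \to H$, $g \mapsto g(z^0)$, is an isomorphism of groups. It is a homomorphism because multiplication in the base group $H^{\langle z \rangle}$ is pointwise; it is injective because a function in $D$ is determined by its value at $z^0$; and it is surjective because for every $h \in H$ the function supported at $z^0$ with value $h$ belongs to $D$. Hence $\mathrm{ev}^{-1}$ sends $h \in H$ to the point mass at $z^0$ with value $h$, and in particular $\mathrm{ev}^{-1}(x_i) = \delta_i = [z,f_i]$.

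Finally, since $X = \{x_1, x_2, \ldots\}$ generates $H$ and $\mathrm{ev}$ is an isomorphism carrying $[z,f_i]$ to $x_i$, the restriction of $\mathrm{ev}^{-1}$ to $H$ is an isomorphism of $H$ onto $\langle \mathrm{ev}^{-1}(x_i) \mid i \in \mathbb{N} \rangle = \langle [z,f_i] \mid i \in \mathbb{N} \rangle$, which is a subgroup of $L$ since each $[z,f_i] \in L$. This is exactly the isomorphism induced by $x_i \mapsto [z,f_i]$. I do not anticipate a genuine obstacle: the one point requiring care is that one may not simply declare $x_i \mapsto [z,f_i]$ to be an isomorphism without verifying that no relations among the $x_i$ are lost and no new ones are imposed, and producing the explicit inverse $\mathrm{ev}$ (evaluation at the identity coordinate) settles both directions simultaneously.
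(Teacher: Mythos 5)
Your proof is correct and takes the same route as the paper, which simply observes that by \eqref{eq-evidence} each $[z,f_i]$ is the point-mass function at $z^0$ with value $x_i$, so these elements live in the coordinate copy of $H$ at the identity of $\langle z\rangle$; you merely spell out the evaluation-map isomorphism that the paper leaves implicit.
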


The next step is to embed the group $L$ into a two generated subgroup $G=\langle f, s \rangle$ of the full wreath product $L \wr \langle s \rangle < H \wr\langle z \rangle \wr \langle s \rangle$, where $\langle s \rangle$ is another infinite cyclic group generated by $s$, as follows. Define the function $f: \langle s \rangle \rightarrow L $ as
\begin{align*}
   f(s^n)= \left\{
                      \begin{array}{ll}
                       z  & \mbox{if $n=1$ ,}\\
                       f_i  & \mbox{if $n=2^i$, $i\in \mathbb{N}$ ,}\\
                       1 & \mbox{otherwise.}
                     \end{array}
                    \right.
\end{align*}
Then 
\begin{align*}
   [f, f^{s^{2^i-1}}](s^n)=(ff^{s^{2^i-1}}f^{-1}f^{-s^{2^i-1}})(s^n)= \left\{
                      \begin{array}{ll}
                       [z, f_i]  & \mbox{if $n=1$ ,}\\
                       1 & \mbox{otherwise.}
                     \end{array}
                    \right.
\end{align*}
Therefore, by Observation \ref{observation 1}, the map $x_i \mapsto [f, f^{s^{2^i-1}}]$ induces an embedding of the group $H$ into $G=\langle f, s \rangle \leq L \wr \langle s \rangle $. Thereby, we are ready to define the embedding $\Phi_X: H \hookrightarrow G$ as the one induced by the map $ x_i \mapsto [f, f^{s^{2^i-1}}]$, $i\in \mathbb{N}$. Since this map is computable, it implies part \ref{c} of Theorem \ref{theorem-embedding}. Also, part \ref{d} of Theorem \ref{theorem-embedding} follows from the observation that $G$ is a subgroup of $H \wr \langle z \rangle \wr \langle s \rangle$.

Note that the final group $G$ depends on the initial choice and enumeration of the generating set of $H$. This property is of essential importance for our applications, as the initial group $H$ that we consider in applications is isomorphic to $\bigoplus_i \mathbb{Z}_i$, and only for its particular presentation it leads to the desirable computational properties for the end group $G$.

Note that every word $w$ from $\{f^{\pm 1},  s^{\pm 1}\}^*$ can be computationally rewritten in the form
\begin{align}
\label{eq rewriting 1}
    w'=(f^{s^{\gamma_1}})^{\beta_1}(f^{s^{\gamma_2}})^{\beta_2}\ldots (f^{s^{\gamma_n}})^{\beta_n} s^{\delta},
\end{align}
where  $\delta$, $\gamma_i$, $\beta_i$, $i=1, \ldots n$, are some integers, such that $w'$ represents the same element of $G$ as $w$.

\begin{lem}[Lemma 4, \cite{darbinyan - embedding}]
\label{lemma-word problem}
The word $w'$ from \eqref{eq rewriting 1} represents the trivial element in $G$ if and only if  the following conditions take place.
\begin{itemize}
    \item $\delta=0$,
    \item $(f^{s^{\gamma_1}})^{\beta_1}(f^{s^{\gamma_2}})^{\beta_2}\ldots (f^{s^{\gamma_n}})^{\beta_n}  $, regarded as a map $\langle s \rangle \rightarrow L$, takes trivial values on each point $s^{\mu}$, where $\mu$ ranges from $-3\max\{ \|\gamma_1 \|, \ldots, \|\gamma_n\|\}$ to $3\max\{ \|\gamma_1 \|, \ldots, \|\gamma_n\|\}$,
    \item $\sum_{j\in B_i} \beta_j=0$ for $i=1, \dots, n$, where $B_i=\large\{k\in \{1, \ldots, n\}\mid \gamma_j=\gamma_i \large \}$.
\end{itemize}

\end{lem}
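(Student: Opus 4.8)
The plan is to read off the normal form of $w'$ inside the iterated wreath product $L\wr\langle s\rangle \le H\wr\langle z\rangle\wr\langle s\rangle$. Since each $f^{s^{\gamma_j}} = s^{\gamma_j} f s^{-\gamma_j}$ lies in the base subgroup $L^{\langle s\rangle}$, the product $(f^{s^{\gamma_1}})^{\beta_1}\cdots(f^{s^{\gamma_n}})^{\beta_n}$ is an element $m$ of $L^{\langle s\rangle}$, so $w' = m\, s^{\delta}$, and $w' =_G 1$ if and only if $\delta = 0$ and $m$ is the trivial function. Evaluating pointwise gives $m(s^{\mu}) = \prod_{j=1}^{n}\bigl(f(s^{\mu+\gamma_j})\bigr)^{\beta_j}$, the product taken in $L$ in the order $j = 1,\dots,n$, and by the definition of $f$ each factor $f(s^{\mu+\gamma_j})$ is trivial unless $\mu+\gamma_j$ is a power of $2$, in which case it equals $z$ (when $\mu+\gamma_j = 1$) or $f_k$ (when $\mu+\gamma_j = 2^k$, $k\ge 1$). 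Thus the whole question reduces to deciding when $m(s^{\mu}) = 1$ for every $\mu\in\mathbb{Z}$.

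Next I would split according to the size of $\mu$, writing $\Gamma := \max_j \|\gamma_j\|$. For $\mu < -3\Gamma$ every $\mu+\gamma_j$ is negative, hence no factor survives and $m(s^{\mu}) = 1$ automatically. The crucial point is that for $\mu > 3\Gamma$ at most one of the ``$\gamma$-classes'' $B_i = \{j : \gamma_j = \gamma_i\}$ can be active: if $\mu+\gamma_i = 2^{k}$ and $\mu+\gamma_{i'} = 2^{k'}$ with $\gamma_i\ne\gamma_{i'}$, then $2^{k},2^{k'} > 2\Gamma$ while $|2^{k}-2^{k'}| = |\gamma_i-\gamma_{i'}| \le 2\Gamma$, which is impossible since two distinct powers of $2$ both exceeding $2\Gamma$ differ by more than $2\Gamma$. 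So for $\mu > 3\Gamma$ only the indices of one class contribute, they all take the common value $g\in\{f_1,f_2,\dots\}$ (the value $z$ at $2^0 = 1$ can only occur when $|\mu|\le 3\Gamma$), and — since all remaining factors are trivial and may be deleted without having to commute genuinely distinct nontrivial elements of $L$ — they collapse to $m(s^{\mu}) = g^{\sigma_i}$ with $\sigma_i := \sum_{j\in B_i}\beta_j$. Letting $\mu = 2^{k}-\gamma_i$ with $k$ large and using that the generators of $H$ have infinite order (which is the case in the intended applications, where $H\cong\bigoplus_i\mathbb{Z}_i$), $m(s^{2^{k}-\gamma_i}) = f_k^{\sigma_i} = 1$ forces $\sigma_i = 0$; this is exactly the third bullet. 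Conversely, if $\sigma_i = 0$ for every class, then $m(s^{\mu}) = 1$ automatically for all $|\mu| > 3\Gamma$, so the condition $m\equiv 1$ collapses to the finitely many equalities $m(s^{\mu}) = 1$ with $|\mu|\le 3\Gamma$, which is the second bullet. Combining the three observations yields the stated equivalence.

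I expect the main obstacle to be precisely the range $\mu > 3\Gamma$: isolating the unique active $\gamma$-class through the exponential spacing of the powers of $2$, verifying that the constant $3\Gamma$ is large enough to push the relevant power of $2$ past the window $[\mu-\Gamma,\mu+\Gamma]$ (and past $2^{0}=1$, so that no copy of $z$ intrudes outside the finite range), and noting that the non-commutativity of $L$ causes no trouble here only because every factor outside the active class is trivial. The remaining ingredients — the pointwise description of $m$, the automatic vanishing for $\mu < -3\Gamma$, the degenerate case in which all $\gamma_j$ coincide, and the collection of exponents within a single class — are routine. This is Lemma~4 of \cite{darbinyan - embedding}, and the above is the natural way to recover it in the present notation, using the explicit form of $f$ fixed in the proof of Theorem~\ref{theorem-embedding} and the rewriting \eqref{eq rewriting 1}.
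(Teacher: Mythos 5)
Your overall strategy --- evaluate the base component $m=(f^{s^{\gamma_1}})^{\beta_1}\cdots(f^{s^{\gamma_n}})^{\beta_n}$ pointwise, observe that $m(s^{\mu})=1$ automatically for $\mu<-3\Gamma$ (with $\Gamma=\max_j\|\gamma_j\|$), and isolate a single active $\gamma$-class for $\mu>3\Gamma$ via the spacing of powers of $2$ --- is sound and is the natural route to this lemma (which the paper itself does not reprove but quotes from \cite{darbinyan - embedding}). However, there is a genuine gap in your proof that the third bullet is \emph{necessary}. You deduce $\sigma_i:=\sum_{j\in B_i}\beta_j=0$ by evaluating at $\mu=2^{k}-\gamma_i$ for large $k$, getting $f_k^{\sigma_i}=1$, and then invoking the assumption that the generators of $H$ have infinite order, justified only by ``the intended applications''. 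But the lemma, and Theorem \ref{theorem-embedding} whose parts \ref{a}, \ref{b}, \ref{f} rest on it, are stated for an arbitrary countable $H=\langle X\rangle$: if some $x_k$ is a torsion (or trivial) element, then $f_k$ has the same finite order in $L$, since $f_k^{\beta}(z^n)=x_k^{\beta}$ for $n>0$, and $f_k^{\sigma_i}=1$ no longer forces $\sigma_i=0$. As written, your argument proves the lemma only under a hypothesis absent from its statement.

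The gap is easy to close, and the right pivot is $z$ rather than the $f_k$'s: evaluate at $\mu=1-\gamma_i$. At this point exactly the factors indexed by $B_i$ take the value $z$, while every other factor is either trivial or one of the $f_k$'s, hence lies in the base subgroup $H^{\langle z\rangle}$ of $H\wr\langle z\rangle$. Applying the retraction $H\wr\langle z\rangle\to\langle z\rangle$ (a homomorphism, so the interleaving of noncommuting factors is irrelevant) to $m(s^{1-\gamma_i})=1$ yields $z^{\sigma_i}=1$, hence $\sigma_i=0$, because $z$ always has infinite order --- no assumption on $H$ is needed. This also repairs the small slip in your parenthetical claim that the value $z$ can occur only for $|\mu|\le 3\Gamma$: when all $\gamma_j=0$ one has $\Gamma=0$ and $z$ appears at $\mu=1>3\Gamma$. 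Your sufficiency direction is unaffected (the single active class contributes $g^{\sigma_i}=1$ whether $g$ is $z$ or some $f_k$), but the necessity of the third bullet should be run through the $z$-coordinate as above in all cases.
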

~\\
Note that for each $\gamma, \beta \in \mathbb{Z}$, $(f^{s^{\gamma}}): \langle s \rangle \rightarrow L$ is computable, because for every $\mu \in \mathbb{Z}$ we have
\begin{align*}
    (f^{s^{\gamma}})^{\beta}(s^{\mu})=  (f(s^{\gamma+{\mu}})(s^{\mu}))^{\beta}=\left\{
                      \begin{array}{ll}
                       f_i^{\beta}  & \mbox{if $\gamma+\mu=2^i$, $i\in \mathbb{N}$,}\\
                       z^{\beta}  & \mbox{if $\gamma+\mu=1$,}\\
                       1 & \mbox{otherwise.}
                     \end{array}
                    \right.
\end{align*}

Therefore, Lemma \ref{lemma-word problem} implies that the word problem in $G$ can be computationally reduced to the computability property of the group $L$ with respect to the generating set $\{z, f_1, f_2, \ldots\}$. This observation combined with Observation \ref{obs-41} implies part \ref{b} of Theorem \ref{theorem-embedding}. Moreover, it implies that if $G$ has recursive presentation, then so does $L$ with respect to $L=\langle z, f_1, f_2, \ldots \rangle$ hence, by Observation \ref{obs-41}, also $H$ with respect to $X$. Combined with the fact that $\Phi_X$ is a computable map, it also implies the inverse. Thus part \ref{a} of Theorem \ref{theorem-embedding} is confirmed as well.


Proof of part \ref{f} of Theorem \ref{theorem-embedding} is based on the following two lemmas from \cite{darbinyan - embedding}. We refer to \cite{darbinyan - embedding} for more details.
\begin{lem}[Lemma 6, \cite{darbinyan - embedding}]
\label{lem 4}

The word $w'$ from \eqref{eq rewriting 1} represents an element from $\Phi_X(H)$ if and only if
\begin{itemize}
    \item $\delta=0$,
    \item $(f^{s^{\gamma_1}})^{\beta_1}(f^{s^{\gamma_2}})^{\beta_2}\ldots (f^{s^{\gamma_n}})^{\beta_n}  $, regarded as a map $\langle s \rangle \rightarrow L$, takes trivial values on each point $s^{\mu}$, where $\mu \neq 1$ and ranges from $-3\max\{ \|\gamma_1 \|, \ldots, \|\gamma_n\|\}$ to $3\max\{ \|\gamma_1 \|, \ldots, \|\gamma_n\|\}$,
    \item $\sum_{j\in B_i} \beta_j=0$ for $i=1, \dots, n$, where $B_i=\large\{k\in \{1, \ldots, n\}\mid \gamma_j=\gamma_i \large \}$, and
    \item $(f^{s^{\gamma_1}})^{\beta_1}(f^{s^{\gamma_2}})^{\beta_2}\ldots (f^{s^{\gamma_n}})^{\beta_n} (s) \in \langle~ [z,f_i] ~\mid~ i \in \mathbb{N} \rangle$.
\end{itemize}
\end{lem}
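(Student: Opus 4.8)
The plan is to realize $w'$ as an element of the iterated wreath product $L \wr \langle s \rangle = L^{\langle s \rangle}\rtimes\langle s\rangle$ and to make the criterion ``$w'\in\Phi_X(H)$'' completely explicit at the level of the function $\langle s\rangle\to L$ that $w'$ determines. Set $M_0 := \langle\, [z,f_i] \mid i\in\mathbb{N}\,\rangle\leq L$. Two structural facts drive the argument. First, each $[z,f_i]$ lies in the base group of $H\wr\langle z\rangle$ (it is the function supported at $z^0$ with value $x_i$), so $M_0$ lies in that base group and, by Observation \ref{observation 1}, $M_0\cong H$; moreover, from the displayed identity $[f,f^{s^{2^i-1}}](s^n)=[z,f_i]$ for $n=1$ and $1$ otherwise, together with the fact that the $[z,f_i]$ generate $M_0$, the subgroup $\Phi_X(H)$ is precisely the set of functions $\langle s\rangle\to L$ that are supported on the single point $s^1$ and take their value there in $M_0$. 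Second, for $w'$ in the form \eqref{eq rewriting 1} the $\langle s\rangle$-component is $s^{\delta}$, and when $\delta=0$ the associated function $\psi\in L^{\langle s\rangle}$ is given pointwise by $\psi(s^{\mu})=\prod_{j=1}^{n} f(s^{\mu+\gamma_j})^{\beta_j}$, exactly as in the evaluation formula recorded before Lemma \ref{lemma-word problem}.

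Combining these, $w'\in\Phi_X(H)$ is equivalent to the conjunction: $\delta=0$; $\psi(s^{\mu})=1$ for all $\mu\neq 1$; and $\psi(s^1)\in M_0$. The last of these is bullet four, so it remains only to show that, granted $\delta=0$, the condition ``$\psi(s^{\mu})=1$ for all $\mu\neq 1$'' is equivalent to bullet two (the same vanishing over the finite range $\|\mu\|\le 3M$, $\mu\neq1$) together with bullet three; here $M=\max_j\|\gamma_j\|$. This is the reduction already performed inside Lemma \ref{lemma-word problem}, and I would reuse its mechanism. For the substantive direction: if $\mu<-3M$ then every $\mu+\gamma_j\le 0$, so $f(s^{\mu+\gamma_j})=1$ and $\psi(s^{\mu})=1$ automatically; if $\mu>3M$ then all $\mu+\gamma_j$ exceed $2M$, any two of them differ by at most $2M$, and consecutive powers of $2$ above $2M$ are more than $2M$ apart, so at most one value $\gamma$ among the $\gamma_j$ can satisfy $\mu+\gamma=2^i$, whence $\psi(s^{\mu})=f_i^{\,\sum_{j\in B}\beta_j}=f_i^{0}=1$ by bullet three, where $B$ is the corresponding block (the degenerate case $M=0$, in which all $\gamma_j$ coincide, being immediate). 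Conversely, bullet two is a special case, and for bullet three I would fix a distinct value $\gamma$ of the $\gamma_j$'s and project $\psi(s^{1-\gamma})$ onto the infinite cyclic quotient $\langle z\rangle$ of $H\wr\langle z\rangle$: the contributions of the $f_i$'s and of everything in $M_0$ die under this projection while $z$ survives with infinite order, so the projection equals $z^{\sum_{j\in B}\beta_j}$ and must be trivial — using $\psi(s^{1-\gamma})=1$ when $\gamma\neq0$ and the hypothesis $\psi(s^1)\in M_0$ when $\gamma=0$.

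The main obstacle is the collision, after shifting by $\gamma_j$, of the argument $s^1$ (where $f$ takes the value $z$) with the arguments $s^{2^i}$ (where $f$ takes values $f_i$): near $s^1$ the function $\psi$ is a genuinely mixed word in $z$ and the $f_i$'s rather than a single power, and the same is true of $\psi(s^{1-\gamma})$ for small $\gamma$. This is exactly why the lemma must excise $\mu=1$ from the vanishing clause and replace it by the separate membership requirement $\psi(s^1)\in M_0$; the $\langle z\rangle$-projection is the device that extracts the needed $\beta$-sums cleanly in spite of this, and in particular sidesteps the possibility that some $f_i$ has finite order. Every other step is a direct unwinding of the wreath-product multiplication, and the only genuine estimate — the sparsity of powers of $2$ — is inherited verbatim from the proof of Lemma \ref{lemma-word problem}; so this lemma is, in effect, that one together with the analysis of the single value $\psi(s^1)$ against $M_0$.
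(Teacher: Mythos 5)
Your argument is correct: the identification of $\Phi_X(H)$ with the elements of trivial $\langle s\rangle$-part whose base-group function is supported at $s^1$ with value in $\langle [z,f_i]\mid i\in\mathbb{N}\rangle$, the sparsity-of-powers-of-$2$ estimate handling $|\mu|>3\max_j\|\gamma_j\|$, and the projection to the $\langle z\rangle$-quotient to extract the $\beta$-sums (including the $\gamma=0$ block via the membership $\psi(s^1)\in\langle [z,f_i]\rangle$) together give a complete proof of the stated equivalence. Note that this paper does not prove the lemma at all — it is quoted from Lemma 6 of \cite{darbinyan - embedding} — so there is no in-text proof to compare with; your proof is the natural extension of the mechanism behind Lemma \ref{lemma-word problem} and serves as a sound self-contained substitute for the citation.
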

\begin{lem}[Lemma 7, \cite{darbinyan - embedding}]
$\bar{f}:= (f_1^{z^{\eta_1}})^{\xi_1} (f_2^{z^{\eta_2}})^{\xi_2} \ldots  (f_m^{z^{\eta_m}})^{\xi_m}z^{\eta} \in L$ belongs to $\langle [z, f_i] \mid i \in \mathbb{N} \rangle \leq L$ if and only if
$\eta = 0$ and $\bar{f} (z^{\pm 1})=\bar{f} (z^{\pm 2}) = \ldots = \bar{f} (z^{\pm \eta_0}) =1 $, where $\eta_0 = \max \{|\eta_1|, \ldots, |\eta_m| \}$.
\end{lem}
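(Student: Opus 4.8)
The plan is to make the subgroup $N:=\langle [z,f_i]\mid i\in\mathbb{N}\rangle$ of $L$ completely explicit as a set of functions, and then to read off membership of $\bar f$ by evaluating $\bar f$ as such a function. By \eqref{eq-evidence}, each $[z,f_i]$, viewed inside the base group $H^{\langle z\rangle}$ of $H\wr\langle z\rangle$, is the function supported at the single point $z^0$, taking value $x_i$ there. Since $\{x_i\}$ generates $H$ and multiplication in $H^{\langle z\rangle}$ is coordinatewise, finite products of the $[z,f_i]^{\pm1}$ realize exactly the functions $\langle z\rangle\to H$ supported at $\{z^0\}$; by Observation \ref{observation 1} this is a subgroup of $L$, so $N$ equals it. In particular $N\le H^{\langle z\rangle}$, a normal subgroup of $H\wr\langle z\rangle$ with quotient $\langle z\rangle$. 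Writing $\bar f=g\,z^{\eta}$ with $g:=(f_1^{z^{\eta_1}})^{\xi_1}\cdots(f_m^{z^{\eta_m}})^{\xi_m}\in H^{\langle z\rangle}$, membership $\bar f\in N$ therefore requires $\eta=0$; and granting $\eta=0$, it holds if and only if $g$ is supported at $z^0$, i.e.\ $g(z^n)=1$ for all $n\neq0$.

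The second step is to compute $g$ pointwise. Conjugation by $z^{\eta_j}$ shifts the argument, $(f_j^{z^{\eta_j}})(z^n)=f_j(z^{n+\eta_j})$, so by the definition of $f_j$ the $j$-th factor of $g$ contributes $x_j^{\xi_j}$ precisely when $n+\eta_j>0$ and $1$ otherwise. Hence
\[
g(z^n)\;=\;\prod_{\substack{1\le j\le m\\ n+\eta_j>0}}x_j^{\xi_j},
\]
the product read in increasing order of $j$ (the ordering matters, as $H$ need not be abelian).

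The heart of the matter is that this product stabilizes fast. The index set $\{\,j:n+\eta_j>0\,\}$ is nondecreasing in $n$; since $|\eta_j|\le\eta_0$ for every $j$, it is empty for all $n\le-\eta_0$ and it equals $\{1,\dots,m\}$ for all $n>\eta_0$. Thus $g(z^n)=1$ automatically when $n\le-\eta_0$, $g(z^n)$ equals the fixed element $\prod_{j=1}^{m}x_j^{\xi_j}$ when $n>\eta_0$, and $g$ can change value only at the finitely many integers lying in a window of radius essentially $\eta_0$ about the origin. Consequently $g$ is supported at $z^0$ exactly when it vanishes on that bounded window, which together with $\eta=0$ is the condition $\bar f(z^{\pm1})=\dots=\bar f(z^{\pm\eta_0})=1$ in the statement. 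The converse is immediate: if $\bar f\in N$ then $\bar f$ has trivial $\langle z\rangle$-coordinate, so $\eta=0$, and $\bar f(z^n)=1$ for all $n\neq0$, in particular on the window.

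The one place I expect to slow down is the bookkeeping that fixes the exact radius of the window to be tested and that handles the boundary indices $n=\pm\eta_0$ directly: one must verify that the eventual constant value $\prod_{j}x_j^{\xi_j}$ is already attained --- hence seen to be trivial --- within the tested window, and that the lowest nontrivial exponent on the negative side is covered as well, both of which follow from $|\eta_j|\le\eta_0$. Everything else is the routine dictionary between an element of $L\le H\wr\langle z\rangle$ and the function it represents, already used for Lemma \ref{lemma-word problem} and Lemma \ref{lem 4}.
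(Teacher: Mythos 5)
Your overall strategy is the natural one and is presumably the route of the cited proof (this paper itself gives no argument, deferring to Lemma~7 of \cite{darbinyan - embedding}): identify $\langle [z,f_i]\mid i\in\mathbb{N}\rangle$ with the subgroup of the base $H^{\langle z\rangle}$ consisting of functions supported at the identity of $\langle z\rangle$ (via \eqref{eq-evidence} and Observation \ref{observation 1}), project to $\langle z\rangle$ to force $\eta=0$, and then evaluate $g=(f_1^{z^{\eta_1}})^{\xi_1}\cdots(f_m^{z^{\eta_m}})^{\xi_m}$ pointwise, getting $g(z^n)=\prod_{j:\,n+\eta_j>0}x_j^{\xi_j}$ in increasing order of $j$. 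All of that is correct.

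The gap is exactly at the step you set aside as bookkeeping. You claim that the eventual constant value $\prod_{j=1}^m x_j^{\xi_j}$ is already attained inside the tested window $1\le |n|\le \eta_0$, ``both of which follow from $|\eta_j|\le\eta_0$''. It does not follow: at $n=\eta_0$ the index set is $\{j:\eta_j>-\eta_0\}$, so any factor with $\eta_j=-\eta_0$ is still missing, and the full product first appears at $n=\eta_0+1$, outside the window. Concretely, take $m=1$, $\eta_1=-1$, $\xi_1=1$, $\eta=0$, so $\bar f=f_1^{z^{-1}}$ and $\eta_0=1$. Then $\bar f(z)=f_1(z^0)=1$ and $\bar f(z^{-1})=f_1(z^{-2})=1$, so the stated test is passed, yet $\bar f(z^n)=x_1\neq 1$ for all $n\ge 2$, so $\bar f\notin\langle [z,f_i]\mid i\in\mathbb{N}\rangle$. (The negative side is genuinely harmless, since $g(z^n)=1$ automatically for $n\le-\eta_0$; the problem is only the positive boundary.) So your argument proves the criterion with the window enlarged to $1\le|n|\le\eta_0+1$ (equivalently, adding the test $\bar f(z^{\eta_0+1})=\prod_j x_j^{\xi_j}=1$), but not the statement with window $\eta_0$ as quoted; to close the gap you must either justify the smaller window under whatever normalization of the expression the cited Lemma~7 assumes, or work with the corrected bound, which is what actually gets used (any computable bound suffices for parts \ref{f} and \ref{b} of Theorem \ref{theorem-embedding}).
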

~\\

Part \ref{e} of Theorem \ref{theorem-embedding} is a direct consequence of the following lemma.
\begin{lem}
\label{lemma-computable order}

Let $K$ be a finitely generated subgroup of a wreath product $A \wr B$ of two left- or bi-  orderable groups such that for each $f \in A^B$ and $b \in B$, $fb \in K$ implies that $supp (f)$ is well-orderable with respect to an order on $B$. Then the group $K$ is correspondingly left- or bi-  orderable.

Moreover, if $K$ has decidable word problem, $A$ and $B$ possess computable orders $\preceq_A$ and $\preceq_B$ respectively, and the set  $ \big\{ (f_1 b, f_2b) \in K \times K \mid f_1(x_0) \prec_A f_2(x_0),  x_0=\min\{x \in  supp(f_1 f_2^{-1})\} \big\}$ is recursive, then $K$ has a computable order.
\end{lem}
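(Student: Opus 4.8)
The plan is to prove the two assertions of Lemma~\ref{lemma-computable order} in turn, the first being a structural statement about orderability of subgroups of wreath products, and the second an effectivization of it.

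\textbf{Orderability.} First I would recall the standard construction of an order on a wreath product $A \wr B$ from orders $\preceq_A$ on $A$ and $\preceq_B$ on $B$: declare $fb \succ 1$ if either $b \succ_B 1$, or $b = 1$ and $f \neq 1$ with $f(x_0) \succ_A 1$, where $x_0 = \min(\mathrm{supp}(f))$ with respect to $\preceq_B$. The subtlety is that $\mathrm{supp}(f)$ need not have a minimum in general — this is exactly why the hypothesis on $K$ is imposed: every $fb \in K$ has $\mathrm{supp}(f)$ well-ordered by $\preceq_B$, so $x_0$ is well-defined for all elements of $K$. I would then verify that the restriction of this relation to $K$ is a total order (trichotomy is immediate from well-ordering of supports), that it is left-invariant, and — in the bi-orderable case, when $\preceq_A$ and $\preceq_B$ are both bi-orders — also right-invariant. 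The left-invariance check: if $(f_1 b_1)(f_2 b_2) = f_1 f_2^{b_1} b_1 b_2$, one compares leading terms; when $b_1 = b_2 = 1$ one uses that $\mathrm{supp}(f_1 f_2) \subseteq \mathrm{supp}(f_1) \cup \mathrm{supp}(f_2)$ is again well-ordered, and that multiplication by $f_1$ shifts the leading coefficient by $f_1(x_0)$, which preserves positivity since $\preceq_A$ is (left-)invariant. Conjugation-type arguments handle the general $b_1$ case. I should be careful to note that well-orderedness of supports is closed under the group operations restricted to $K$ (which holds automatically since those products land back in $K$), so the order is well-defined on all of $K$.

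\textbf{Computability.} Now suppose $K$ has decidable word problem, $\preceq_A$, $\preceq_B$ are computable, and the displayed set $S = \{(f_1 b, f_2 b) \in K \times K \mid f_1(x_0) \prec_A f_2(x_0),\ x_0 = \min \mathrm{supp}(f_1 f_2^{-1})\}$ is recursive. Fix a finite generating set for $K$; I want to decide, given words $u, v$ over it, whether $u \preceq_K v$, i.e. whether $u^{-1}v \succeq_K 1$. Writing $u^{-1}v = fb$ in wreath coordinates, by the definition of the order I must (i) decide the sign of $b$ in $B$, (ii) if $b = 1$, decide whether $f = 1$ (word problem in $K$, decidable), and (iii) if $b = 1$ and $f \neq 1$, decide whether $f(x_0) \succ_A 1$ where $x_0 = \min \mathrm{supp}(f)$. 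For (i): $b$ is the image of $u^{-1}v$ under the quotient $A \wr B \to B$; this map is computable on words, and $\preceq_B$ is computable, so this is effective. For (iii), I apply the hypothesis with the pair $(1 \cdot b,\ f \cdot b)$ — wait, more precisely one massages $fb$ into the shape $(f_1 b, f_2 b)$ demanded by $S$: take $f_1 = 1$, $f_2 = f^{-1}$ (or directly $f_1, f_2$ with $f_1 f_2^{-1} = f^{-1}$), and note $\mathrm{supp}(f_1 f_2^{-1}) = \mathrm{supp}(f)$, so membership of the appropriate pair in $S$ decides the sign of $f(x_0)$. Both coordinates of that pair must themselves lie in $K$; since $fb \in K$ and $b \in K$ (as $b$ is itself realized by a word, e.g. a power of a generator mapping to it — or one argues $1 \cdot b$ and $f \cdot b$ are in $K$ directly because $fb, b \in K$), this is fine, though I may need a short argument that the constant function $1$ times $b$ lies in $K$; alternatively reformulate using $u^{-1}v$ and $b$ directly as the two $K$-elements and observe $\mathrm{supp}$ of their "quotient" is $\mathrm{supp}(f)$.

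\textbf{Main obstacle.} The genuinely delicate point is not any single verification but keeping the bookkeeping honest: ensuring that the two group elements fed into the recursive set $S$ genuinely belong to $K$ and have the right support relationship, and — more importantly — confirming that the order I define is \emph{total} on $K$, which hinges entirely on the well-orderedness hypothesis on supports. A purely left-orderable $A$ or $B$ with badly-behaved supports would break trichotomy; the hypothesis is precisely calibrated to rule this out, and I expect the write-up to spend most of its care there and on the right-invariance computation in the bi-ordered case. The effectivization itself is then a routine reduction of the three cases above to the three decidable/recursive ingredients (word problem in $K$, computability of $\preceq_B$, recursiveness of $S$).
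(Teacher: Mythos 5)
Your construction and decision procedure are essentially the paper's: the same Neumann-type order on $K$ (the paper defines it directly on pairs and simply cites Neumann for the verification you sketch), with $u \preceq_K v$ decided from the word problem in $K$, the computability of $\preceq_B$ applied to the $B$-coordinates, and the recursive set $S$. The one loose end you flag — whether $1\cdot b$ lies in $K$ (your suggested justification via "a power of a generator mapping to $b$" would be false in general) — is actually moot: the paper sidesteps it by feeding the original pair $(u,v)$ into $S$ whenever their $B$-coordinates coincide, and even in your positivity reduction $S$ is only invoked in the case $b=1$, so the pair used is $(1,\,u^{-1}v)\in K\times K$ (note also that choosing $f_2=f^{-1}$ tests the opposite sign, but since $f(x_0)\neq 1$ either test decides).
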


\begin{proof}
Define $f_1b_1 \prec_K f_2b_2$ if either $b_1 \prec_B b_2$ or $b_1=b_2$ and $f_1 \neq f_2$ and $f_1(x_0) \prec_A f_2(x_0)$, where $x_0=\min\{x \in  supp(f_1 f_2^{-1})\} $. It is straightforward to show that  the order $\prec_K$ is a bi-order in case $\prec_A$ and $\prec_B$ are bi-orders on $A$ and $B$, respectively. See  \cite{neumann-embedding} for details. The case for left-orders can be shown identically.

Proof of the second statement: As for the first part, define $f_1b_1 \prec_K f_2b_2$ if either $b_1 \prec_B b_2$ or $b_1=b_2$ and $f_1 \neq f_2$ and $f_1(x_0) \prec_A f_2(x_0)$, where $x_0=\min\{x \in  supp(f_1 f_2^{-1})\} $. Therefore, since $K$ and  $\preceq_B$ are computable and, by our assumption, $ \big\{ (f_1 b, f_2b) \in K \times K \mid f_1(x_0) \prec_A f_2(x_0),  x_0=\min\{x \in  supp(f_1 f_2^{-1})\} \big\}$ is recursive, we get that $\preceq_K$ is a computable order.
\end{proof}
Now, part \ref{e} of Theorem \ref{theorem-embedding} follows from Lemma \ref{lemma-computable order} and the observation that the groups $L \leq H \wr \langle z \rangle$ and correspondingly $G=\langle f, s \rangle \leq L \wr \langle s \rangle $ satisfy the assumptions from Lemma \ref{lemma-computable order}.\\

Finally, for part \ref{f} of Theorem \ref{theorem-embedding}, note that we have 
\begin{align*}
    \Phi_X(H) = \langle [f, f^{s^{2^i-1}}] \mid i\in \mathbb{N} \rangle \vartriangleleft \langle [f, f^{j} f^{s^{2^i-1}} f^{-j}] \mid i, j\in \mathbb{N} \rangle \vartriangleleft \langle f^{s^i} \mid i \in \mathbb{N} \rangle  \vartriangleleft \langle f, s \rangle = G.
\end{align*}
Therefore, one can take $N_1 =  \langle [f, f^{j} f^{s^{2^i-1}} f^{-j}] \mid i, j\in \mathbb{N} \rangle$ and $N_2=\langle f^{s^i} \mid i \in \mathbb{N} \rangle$.
~\\
~\\
\noindent
\textbf{Acknowledgements:} I would like to thank Markus Steenbock for many motivating conversations. The paper was written during my postdoctoral year at \'Ecole Normale Sup\'erieure in Paris, where I was supported by ERC-grant GroIsRan no.725773 of Anna Erschler.


~\\
~\\
~\\
  A. Darbinyan, \textsc{Department of Mathematics, Mailstop 3368,
Texas A\&M University,
College Station, TX, USA}\par\nopagebreak
  \textit{E-mail address}: \texttt{arman.darbin@gmail.com}

\end{document}